\setlist{nosep}
\setlist[enumerate,1]{label=(\arabic*)}
\newtheorem{claim}{Claim}
\newcommand*{\claimproof}{Proof of the Claim}
\newenvironment{proofcl}[1][\claimproof]{\begin{proof}[#1]
}{\end{proof}}
\newtheorem{theorem}{Theorem}
\newtheorem{proposition}[theorem]{Proposition}
\newtheorem{conjecture}[theorem]{Conjecture}
\newtheorem{originalConjecture}{Conjecture}
\newcommand{\Aa}{\mathcal{A}}
\newcommand{\Bb}{\mathcal{B}}
\newcommand{\Cc}{\mathcal{C}}
\newcommand{\Dd}{\mathcal{D}}
\newcommand{\Ee}{\mathcal{E}}
\newcommand{\Ff}{\mathcal{F}}
\newcommand{\Gg}{\mathcal{G}}
\newcommand{\Oo}{\mathcal{O}}
\newcommand{\Pp}{\mathcal{P}}
\newcommand{\Qq}{\mathcal{Q}}
\newcommand{\Rr}{\mathcal{R}}
\newcommand{\sm}{\setminus}
\newcommand{\splitatcommas}[1]{%
  \begingroup
  \begingroup\lccode`~=`, \lowercase{\endgroup
    \edef~{\mathchar\the\mathcode`, \penalty0 \noexpand\hspace{0pt plus 1em}}%
  }\mathcode`,="8000 #1%
  \endgroup
}
\title{The number of abundant elements in union-closed families without small sets\thanks{The
work of the first and third author was supported
by project 20-09525S of the Czech Science Foundation.
All authors are affiliated with the Faculty
of Applied Sciences, University of West Bohemia, Czech Republic. 
E-mails: {\tt kabela@kma.zcu.cz, polakmi@students.zcu.cz, teska@kma.zcu.cz}.}}
\author{Adam Kabela \and Michal Pol\'{a}k \and Jakub Teska}
\date{} 
\begin{document}
\maketitle

\begin{abstract}
We let $\Ff$ be a finite family of sets closed under taking unions and $\emptyset \not \in \Ff$,
and call an element \emph{abundant}
if it belongs to more than half of the sets of~$\Ff$.
In this notation, the classical Frankl's conjecture (1979)
asserts that $\Ff$ has an abundant element.
As possible strengthenings, Poonen (1992) conjectured that if $\Ff$ has 
precisely one abundant element, then
this element belongs to each set of~$\Ff$,
and Cui and Hu (2019) investigated whether
$\Ff$ has at least $k$ abundant elements if a smallest set of $\Ff$ is of size at least $k$.
Cui and Hu conjectured that this holds for $k = 2$ and asked 
whether this also holds for the cases $k = 3$ and $k > \frac{n}{2}$
where $n$ is the size of the largest set of $\Ff$.

We show that $\Ff$ has at least $k$ abundant elements if $k \geq n - 3$,
and that $\Ff$ has at least $k - 1$ abundant elements if $k = n - 4$,
and we construct a union-closed family
with precisely $k - 1$ abundant elements for every
$k$ and $n$ satisfying $n - 4 \geq k \geq 3$ and $n \geq 9$
(and for $k = 3$ and $n = 8$).
We also note that $\Ff$ always has at least $\min \{ n, 2k - n + 1 \}$ abundant elements.
On the other hand, we construct a union-closed family
with precisely two abundant elements for every $k$
and $n$ satisfying $n \geq \max \{ 3, 5k-4 \}$.
Lastly, we show that Cui and Hu's conjecture for $k = 2$
stands between Frankl's conjecture and Poonen's conjecture.
\end{abstract}

\section{Introduction}
\label{introduction}
We recall that a collection of sets is a \emph{family} if all sets are distinct,
and a family $\Ff$ of sets is \emph{union-closed}
if for every pair of sets $A$ and $B$ from $\Ff$,
their union $A \cup B$ also belongs to $\Ff$.
For families which do not contain the empty set,
the classical union-closed sets conjecture states the following.
\begin{conjecture}
\label{frankl}
If $\Ff$ is a finite union-closed family of sets such that $\emptyset \not \in \Ff$,
then some element belongs to more than half of the sets of $\Ff$.
\end{conjecture}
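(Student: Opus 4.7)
The plan is to confront the full Frankl union-closed conjecture, which is notoriously open, so any sketch must be understood as a research programme rather than a routine argument. As a first attempt I would try induction on $|\Ff|$ combined with standard structural reductions. If $\Ff$ contains a singleton $\{x\}$, then the map $A \mapsto A \cup \{x\}$ pairs each $A \not\ni x$ with a distinct member of $\Ff$ containing $x$, and a simple counting argument shows $x$ is abundant; hence one may assume every set of $\Ff$ has size at least $2$. Similar normalizations let one assume that no element is contained in all sets, that no two elements have identical incidence patterns, and that $\Ff$ is generated by its inclusion-minimal members. None of these reductions by themselves resolve the conjecture, but they sharply restrict the extremal families one must consider and set up the inductive framework.

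The most promising line I would then pursue is the entropy method introduced by Gilmer (2022). Let $A, B$ be independent uniformly random sets in $\Ff$. The key inequality $H(A \cup B) \leq H(A) = \log_2 |\Ff|$, which holds because $A \cup B \in \Ff$, can be expanded coordinatewise and compared with $\sum_i h\bigl(\Pr[i \in A \cup B]\bigr)$, where $h$ denotes the binary entropy and the sum is over the ground set. Writing $p_i := d(i)/|\Ff|$ for the frequency of $i$, one obtains a functional inequality forcing $\max_i p_i$ away from $0$. Gilmer's original bound was a small absolute constant, subsequently improved to $\tfrac{3-\sqrt{5}}{2} \approx 0.38$ by several authors. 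The forward direction I would attempt is to exploit the hypothesis $\emptyset \notin \Ff$ inside the entropy calculation more aggressively: intuitively, the slack one normally loses at the ``empty'' coordinate of $A \cup B$ should be recaptured as an additional structural term, which together with the reductions of the previous paragraph might push the bound past $\tfrac{1}{2}$.

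The main obstacle, and the reason the conjecture has resisted solution for over forty years, is exactly the gap between what the entropy method delivers and the target $\tfrac{1}{2}$. No presently known inequality, whether combinatorial (averaging over $\sum_{A \in \Ff} |A|$, Reimer-type bounds) or information-theoretic (the Gilmer family of estimates), is tight enough to cross this threshold. I do not expect the plan above to close the gap in full generality; in practice I would therefore restrict attention to variants under additional hypotheses on the smallest set of $\Ff$ or on the number of abundant elements, which is precisely the direction the rest of this paper develops.
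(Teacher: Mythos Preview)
The statement you were asked to prove is Frankl's union-closed sets conjecture, which the paper states as an \emph{open conjecture}, not a theorem. The paper provides no proof of it; on the contrary, it says explicitly that ``Conjecture~\ref{frankl} remains wide open despite the amount of research on the topic.'' So there is no proof in the paper to compare your attempt against.

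Your proposal is appropriately honest about this: you say at the outset that any sketch ``must be understood as a research programme rather than a routine argument,'' and you conclude that you ``do not expect the plan above to close the gap in full generality.'' That is the correct assessment. The reductions you list (singletons force an abundant element, one may assume twin-freeness, etc.) and the entropy method you describe are all genuine partial progress cited in the paper's introduction, but none of them---nor any known combination---proves the conjecture, and you do not claim otherwise.

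The paper's actual contributions (Theorems~\ref{abundance} and~\ref{counterexamples}, Proposition~\ref{hierarchy}) concern the related but strictly weaker question of how many abundant elements must exist when the smallest set of $\Ff$ has size at least $k$, which is exactly the direction you gesture toward in your final sentence. So your closing remark aligns with what the paper goes on to do, but Conjecture~\ref{frankl} itself is left open by both you and the authors.
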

The conjecture is associated with Frankl
and its origins go back to 1970s~\cite{F,BBE}. 
An engaging historical review and numerous related results 
can be found in the survey paper of Bruhn and Schaudt~\cite{BS}.
Notably, Conjecture~\ref{frankl} is also investigated in equivalent formulations
concerning lattices, independent sets in bipartite graphs,
and basis sets in union-closed families (for instance, see~\cite{BS, P, BCST, W}).

Conjecture~\ref{frankl} remains wide open
despite the amount of research on the topic
and the recent breakthrough result of Gilmer~\cite{G}
instantly followed by~\cite{AHS, CL, Pe, S} and~\cite{C}
which proved the statement of Conjecture~\ref{frankl}
in weaker forms with the half replaced by a smaller constant.

In~particular, it is even non-trivial to show that Conjecture~\ref{frankl} is true for small families,
and there is a sequel of papers~\cite{P, LF, Ma, BM, VZ}
showing that Conjecture~\ref{frankl} holds for union-closed families whose largest set is of size at most
7, 9, 10, 11, 12, respectively.
The proofs of the partial results use the fact
that Conjecture~\ref{frankl} is true for families which contain a set of size $1$ or $2$
(first shown in~\cite{SR}).
Hence, the proofs can essentially consider families which contain no set of size $1$ and $2$,
and with more involved arguments also reduce the number of possible sets of size $3$ and $4$
(for instance, see~\cite{Mo}).
This motivates the present study of families whose smallest set is of size at least $k$.
%
%
%
%
%
%

On the other hand, 
it seems that we are far from disproving Conjecture~\ref{frankl}
since there are stronger conjectures which are still open.
As remarked by Poonen~\cite{P},
it seems that union-closed families usually contain many elements
satisfying the statement of Conjecture~\ref{frankl}.
Considering families with only one such element,
Poonen~\cite{P} stated the following conjectures.

\begin{conjecture}
\label{poonen3}
Let $\Ff$ be a finite union-closed family of sets such that $\emptyset \not \in \Ff$.
If precisely one element belongs to more than half of the sets of $\Ff$,
then this element belongs to each set of $\Ff$.
\end{conjecture}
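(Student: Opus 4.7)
The plan is to argue by contradiction. Suppose $x$ is the unique abundant element of $\Ff$ and, contrary to the claim, some $A \in \Ff$ satisfies $x \notin A$. I would first pass to the ``reservoir'' set $A^* = \bigcup \{S \in \Ff : x \notin S\}$; since the sets of $\Ff$ avoiding $x$ are closed under union, $A^* \in \Ff$ and it is the maximum element of $\Ff$ disjoint from $\{x\}$. Splitting $\Ff$ as $\Ff_x = \{S \in \Ff : x \in S\}$ and $\Ff_{\bar x} = \{S \in \Ff : x \notin S\}$, the abundance of $x$ gives $|\Ff_x| > |\Ff_{\bar x}|$, and $\Ff_{\bar x} \subseteq 2^{A^*}$ is itself a finite union-closed family with maximum $A^*$. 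The goal is then to exhibit a second abundant element $y \in A^*$, contradicting the uniqueness of~$x$.

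My main tool would be a double-counting identity on $A^*$:
\[
\sum_{y \in A^*} |\Ff_y| \;=\; \sum_{S \in \Ff_{\bar x}} |S| \;+\; \sum_{S \in \Ff_x} |S \cap A^*|.
\]
If the right-hand side exceeds $|A^*| \cdot |\Ff|/2$, then some $y \in A^*$ satisfies $|\Ff_y| > |\Ff|/2$ by averaging, and we are done. The first sum can be attacked inside the union-closed subfamily $\Ff_{\bar x}$, whose maximum is $A^*$ and in which every $y \in A^*$ appears in at least one set; the second sum is fed by the observation that $T \cup A^* \in \Ff_x$ contributes the full $|A^*|$ for every $T \in \Ff_x$, so it reduces to controlling how often distinct $T$'s collapse to the same $T \cup A^*$.

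The hard part will be that the hypothesis ``$x$ is the only abundant element'' provides only \emph{upper} bounds $|\Ff_y| \leq |\Ff|/2$ for every $y \neq x$, whereas the averaging scheme above needs a matching \emph{lower} bound on $|S \cap A^*|$ for typical $S \in \Ff_x$; these two directions are not obviously compatible. Since Poonen's conjecture has remained open since 1992, a plain averaging argument cannot suffice, and I would expect the real work to lie in one of two refinements: (i) an induction on a minimal counterexample, ordered lexicographically by $(|A^*|, |\Ff_{\bar x}|)$, forcing extra structure such as $\Ff_{\bar x} = \{A^*\}$ in the extremal case; or (ii) an injection $\varphi : \Ff_{\bar x} \to \Ff_x$ that, restricted to the sets through some fixed $y \in A^*$, is strictly non-surjective, thereby exhibiting $y$ as abundant. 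Already the base case $|\Ff_{\bar x}| = 1$, in which $A^*$ is the only set of $\Ff$ avoiding $x$, looks to be a non-trivial subproblem and would be my natural starting point.
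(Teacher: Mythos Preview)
The statement you are attempting is presented in the paper as a \emph{conjecture} (due to Poonen, 1992), not a theorem, and the paper does not prove it. The paper's only contribution concerning this statement is Proposition~\ref{hierarchy}, which places it logically between Conjecture~\ref{poonen4} and Conjecture~\ref{CuiHuS2}. There is therefore no proof in the paper to compare your attempt against.

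Your proposal is not a proof either, and you acknowledge this yourself. The double-counting identity on $A^*$ is correct, but the averaging step would require a lower bound on $\sum_{S \in \Ff_x} |S \cap A^*|$ that the hypothesis does not supply; you note this explicitly and observe that a plain averaging argument cannot be expected to settle a conjecture open since 1992. The two refinements you float---induction on a minimal counterexample ordered by $(|A^*|, |\Ff_{\bar x}|)$, or a strictly non-surjective injection $\Ff_{\bar x} \to \Ff_x$ through a fixed $y \in A^*$---are plausible directions to explore but are not carried out, and even the base case $|\Ff_{\bar x}| = 1$ is left as an unresolved subproblem. What you have written is an honest outline of why the obvious attack stalls and where the difficulty lies, but it does not constitute a proof, and the conjecture remains open.
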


Given a family $\Ff$ of sets,
we say that elements $a$ and $b$ are \emph{twins} if each set $A$ of $\Ff$ satisfies that
$|A \cap \{a,b\}| \neq 1$, and
we say that $\Ff$ is \emph{twin-free} if $\Ff$ contains no twins.

\begin{conjecture}
\label{poonen4}
Let $\Ff$ be a finite union-closed twin-free family of sets such that $\emptyset \not \in \Ff$,
and let $M$ be the largest set of $\Ff$.
If precisely one element belongs to more than half of the sets of $\Ff$,
then $\Ff$ is precisely the family consisting of all sets of $2^M$ which contain this element.
\end{conjecture}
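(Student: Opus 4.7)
The plan is to bootstrap through Conjecture~\ref{poonen3} under the twin-free hypothesis. Let $a$ denote the unique abundant element of $\Ff$. My first goal is to show that $a$ belongs to every set of $\Ff$, which is exactly what Conjecture~\ref{poonen3} asserts in this setting. Assume for contradiction that some $A_0 \in \Ff$ omits $a$. A natural attempt is to compare the sub-families $\Ff_a = \{A \in \Ff : a \in A\}$ and $\Ff_{\bar a} = \Ff \setminus \Ff_a$, using the map $B \mapsto B \cup A_1$ for a well-chosen $A_1 \in \Ff_a$ to inject $\Ff_{\bar a}$ into $\Ff_a$; since $|\Ff_a| > |\Ff|/2$, this asymmetry should yield a second element $a'$ that also appears in more than half of the sets. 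The role of twin-freeness is to block the obvious escape route in which $a'$ is a twin of $a$: because no twin of $a$ exists, the profiles of $a$ and $a'$ must differ, and this difference should be leveraged to contradict the uniqueness of the abundant element.

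Once $a$ is known to lie in every set of $\Ff$, I would descend to the reduced family $\Ff' = \{A \sm \{a\} : A \in \Ff\} \subseteq 2^{M'}$, where $M' = M \sm \{a\}$. The desired conclusion is equivalent to $\Ff' = 2^{M'}$. The hypotheses translate cleanly: $\Ff'$ is union-closed with largest set $M'$; every element of $M'$ appears in at most half of the sets of $\Ff'$; and no two elements of $M'$ share the same profile. To conclude $\Ff' = 2^{M'}$, I would proceed by induction on $|M'|$: pick an element $b \in M'$, split $\Ff'$ into the sub-families containing and avoiding $b$, and verify that the at-most-half and twin-free conditions persist in an appropriately chosen sub-family on $M' \sm \{b\}$, so that the induction closes.

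The main obstacle will be Step~1. Proving that $a$ is in every set of $\Ff$ is already the open Conjecture~\ref{poonen3} in a special case, and any successful argument must genuinely exploit twin-freeness rather than treat it as cosmetic. Even granting Step~1, the descent is not automatic, because there exist union-closed families (for instance, chains) in which a fixed element lies in every set yet the reduced family is far from a full power set. Ruling out such intermediate families hinges on a lemma roughly saying that, under the at-most-half condition together with twin-freeness, $\Ff'$ cannot omit any subset of $M'$ without forcing either a repeated profile or a new abundant element. Extracting and proving this lemma cleanly — most plausibly through a minimal-counterexample analysis combined with an averaging argument over the elements of $M'$ — is where I expect most of the work to lie.
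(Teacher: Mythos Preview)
The statement you are attempting to prove is Conjecture~\ref{poonen4}, which the paper explicitly lists as \emph{open}: ``It appears that Conjectures~\ref{poonen3} and~\ref{poonen4} are still open and little is known in terms of partial results.'' There is no proof in the paper to compare your proposal against; the paper merely states the conjecture and, in Proposition~\ref{hierarchy}, shows that it implies Conjecture~\ref{poonen3}.

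Your proposal is honest about its own gap: Step~1 asks you to show that the unique abundant element $a$ lies in every set of $\Ff$, and you note that this ``is already the open Conjecture~\ref{poonen3} in a special case.'' In fact it is not even a genuinely special case. The paper's proof of Proposition~\ref{relationsAndN}\,(1) shows that any counterexample to Conjecture~\ref{poonen3} can be reduced to a twin-free one by iteratively deleting one element from each twin pair; hence Conjecture~\ref{poonen3} restricted to twin-free families is equivalent to Conjecture~\ref{poonen3} in full. So your Step~1 is exactly the open Conjecture~\ref{poonen3}, and twin-freeness buys you nothing there.

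Step~2 is also not a mop-up. After deleting $a$ you want: if $\Ff'$ is union-closed, twin-free, has largest set $M'$, and every element lies in at most half of the sets, then $\Ff' = 2^{M'}$. Adding back a fresh element to every set of such an $\Ff'$ reproduces precisely the hypotheses of Conjecture~\ref{poonen4}, so this ``lemma'' is equivalent to the conjecture itself (modulo Step~1). Your inductive sketch does not explain why the at-most-half and twin-free conditions survive when you pass to a sub-family on $M' \sm \{b\}$; in general they do not, and this is where the real difficulty sits. In short, the proposal is a plausible decomposition of an open problem into two pieces, each of which is at least as hard as a known open conjecture, rather than a proof.
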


It appears that Conjectures~\ref{poonen3} and~\ref{poonen4} are still open and little is known
in terms of partial results.
The phenomenon of many elements satisfying the statement of Conjecture~\ref{frankl}
is also addressed in a recent work of Cui and Hu~\cite{CH}.
As a possible strengthening of Conjecture~\ref{frankl},
Cui and Hu investigated whether a finite union-closed family of sets
has at least $k$ such elements if its smallest set is of size at least~$k$.
In particular, the case $k = 1$ is precisely Conjecture~\ref{frankl},
and they asked whether it holds for the cases $k = 3$ and $k > \frac{n}{2}$
(where $n$ is the size of the largest set),
and conjectured that it holds for the case $k = 2$.

\begin{conjecture}
\label{CuiHuS2}
If $\Ff$ is a finite union-closed family of sets whose smallest set is of size at least $2$,
then there are at least two elements such that each belongs to more than half of the sets of $\Ff$.
\end{conjecture}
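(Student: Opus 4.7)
The plan is to argue by contradiction: assume $\Ff$ is a finite union-closed family with $\emptyset \not\in \Ff$, whose smallest set has size at least $2$, and which contains at most one abundant element. Write $N = |\Ff|$ and, for $x \in V := \bigcup_{A \in \Ff} A$, put $d(x) = |\{A \in \Ff : x \in A\}|$. Since Conjecture~\ref{CuiHuS2} implies Conjecture~\ref{frankl} (via $\Gg \mapsto \{A \cup \{z\} : A \in \Gg\}$ for a fresh element $z$, under which the unique ``forced'' abundant element is $z$ itself), any attempted proof must in particular exhibit at least one abundant element $a$ with $d(a) > N/2$; by the contradiction hypothesis this $a$ is unique, and the remaining task is to produce a second. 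Preliminary reductions are standard: remove twin pairs so that $\Ff$ is twin-free, dispose of small $|V|$ via the verifications in~\cite{P, LF, Ma, BM, VZ}, and assume the smallest set of $\Ff$ has size exactly $2$, since a larger lower bound only strengthens what is available.

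The main engine is the link family
\[
\Ff_a^{-} \;=\; \{A \setminus \{a\} : a \in A \in \Ff\}.
\]
This is union-closed, because $(A \setminus \{a\}) \cup (B \setminus \{a\}) = (A \cup B) \setminus \{a\}$ and $a \in A \cup B$, and contains no empty set since every $A \in \Ff$ satisfies $|A| \geq 2$. The very same style of argument that produced $a$ — applied now to $\Ff_a^{-}$ — yields an element $b \neq a$ with $d_{\Ff_a^{-}}(b) > d(a)/2$, equivalently $d(a,b) := |\{A \in \Ff : \{a,b\} \subseteq A\}| > d(a)/2$. To conclude, it suffices to show that $b$ is in more than $N/2 - d(a,b)$ of the sets not containing $a$, for then $d(b) > N/2$, contradicting the uniqueness of $a$.

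The decisive step is therefore a \emph{missing-mass} estimate bounding from below $|\{A \in \Ff : a \notin A,\ b \in A\}|$. My plan is to exploit the size-$\geq 2$ hypothesis twice: once through the link reduction above (which crucially uses $A \setminus \{a\} \neq \emptyset$), and again on $\Ff^a := \Ff \setminus \Ff_a$, where every set must meet $V \setminus \{a\}$ in at least two elements. Fixing a minimal $A_0 \in \Ff_a$, the union-closure map $A \mapsto A \cup A_0$ from $\Ff^a$ into $\Ff_a$, together with a Reimer-style double count of pairs $(A, \{x,y\})$ with $\{x,y\} \subseteq A$, should pin down how many sets of $\Ff^a$ can miss the specific element $b$ produced by the link.

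I expect the principal obstacle to lie exactly at this last step: the element $b$ is produced by working above $a$, whereas the missing-mass inequality constrains sets that do not contain $a$ at all, and the union-closure structure by itself offers no direct mechanism for synchronising the two directions. Overcoming it will plausibly require either a Gilmer-style entropy estimate adapted to the link/colink decomposition of $\Ff$, or a Poonen-type structural classification of union-closed families possessing exactly one abundant element in the size-$\geq 2$ regime; absent such an input, this gap is precisely the content by which Conjecture~\ref{CuiHuS2} properly exceeds Conjecture~\ref{frankl}, and is where I expect the heavy lifting of any direct proof to concentrate.
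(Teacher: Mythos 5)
You have not proved the statement, and neither does the paper: Conjecture~\ref{CuiHuS2} is precisely the case $k=2$ of Cui and Hu's question, which the paper explicitly leaves open. The paper's only results about it are relational (Proposition~\ref{hierarchy}, proved via Proposition~\ref{relationsAndN}): Conjecture~\ref{poonen3} implies Conjecture~\ref{CuiHuS2}, and Conjecture~\ref{CuiHuS2} implies Conjecture~\ref{frankl}. So the correct benchmark for your proposal is not a proof in the paper but the fact that no proof is claimed anywhere, and your own text concedes as much in its final paragraph.

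Concretely, there are two gaps, and they are fatal in the current state of knowledge. First, the very first step --- producing the abundant element $a$ for $\Ff$, and again an element $b$ with $d(a,b) > d(a)/2$ for the link family $\Ff_a^{-} = \{A \sm \{a\} : a \in A \in \Ff\}$ --- silently invokes Conjecture~\ref{frankl}, which is open; "the very same style of argument that produced $a$" does not exist as a theorem, and the recent entropy results only give a constant strictly below $\tfrac12$, which is useless here because you need strict majority in both applications. Second, even granting Frankl's conjecture as an oracle, the step you call the missing-mass estimate is exactly the unproved content: knowing that $b$ lies in more than half of the sets containing $a$ gives no control whatsoever over the occurrences of $b$ among the sets avoiding $a$, and union-closedness alone does not synchronise the two (your own closing paragraph admits this). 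The sketched Reimer-style double count and the map $A \mapsto A \cup A_0$ are not carried out, and no inequality is derived from them. What you can salvage is the observation in your last sentence: a Poonen-type structural input does suffice, and that is precisely statement (2) of Proposition~\ref{hierarchy} in the paper, proved there by elementary modifications of $\Ff$ (deleting the unique abundant element from a smallest set, or saturating a chosen element $y$ and then adjusting) rather than by any link-family analysis; if you want a rigorous conditional result, that is the route to cite, while an unconditional proof of Conjecture~\ref{CuiHuS2} remains out of reach of your argument.
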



In the present paper, we show that if a smallest set is large, then
many elements satisfy the statement of Conjecture~\ref{frankl} as follows.
\begin{theorem}
\label{abundance}
Let $\Ff$ be a union-closed family of sets
and $k$ and $n$ be the sizes of its smallest and largest set, respectively.
Let $f$ be the number of elements $x$ such that $x$ belongs to more than half of the sets of $\Ff$.
The following statements are satisfied.
\begin{enumerate}
\item
If $k \geq n - 3$ then $f \geq k$.
\item
If $k = n - 4$ then $f \geq k - 1$.
\item
$f \geq \min \{ n, 2k - n + 1 \}$.
\end{enumerate}
\end{theorem}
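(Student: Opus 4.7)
Let $M = \bigcup_{S \in \Ff} S$, which lies in $\Ff$ by union-closedness, so $|M| = n$ and every element of every set of $\Ff$ belongs to $M$; write $N = |\Ff|$, let $d(x) = |\{S \in \Ff : x \in S\}|$, and set $\bar d(x) = N - d(x)$, so that $x \in M$ is abundant if and only if $\bar d(x) < N/2$. For part~(3) the plan is a one-line double count. Summing gives
\[
\sum_{x \in M} \bar d(x) \;=\; \sum_{S \in \Ff}(n - |S|) \;\leq\; (n-k)(N-1),
\]
since $M$ contributes $0$ and every other set contributes at most $n - k$; every non-abundant element contributes at least $N/2$ to the left-hand side, so $(n-f)N/2 \leq (n-k)(N-1)$. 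If $k = n$ then $\Ff = \{M\}$ and $f = n$; otherwise $N \geq 2$ and the factor $(N-1)/N < 1$, which together with integrality of $n - f$ yields $f \geq 2k - n + 1$. Combined with $f \leq n$ this proves part~(3).

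For parts~(1) and~(2) I would pass to the intersection-closed \emph{complement family} $\bar\Ff = \{M \setminus S : S \in \Ff\}$, whose members all have size at most $j := n-k$ and which contains $\emptyset = M \setminus M$. An element $x$ is abundant in $\Ff$ precisely when it lies in fewer than $N/2$ members of $\bar\Ff$. Plugging $k = n-j$ into part~(3) only gives $f \geq n - 2j + 1$, which falls short of the targets $f \geq n - j$ (for $j \leq 3$, part~(1)) and $f \geq n - j - 1$ (for $j = 4$, part~(2)) by a small additive constant. The plan to close this gap is to use intersection-closedness itself: whenever a point $x$ is frequent in $\bar\Ff$, pairwise intersections of the small sets containing $x$ force many additional singletons and pairs into $\bar\Ff$, and these extra sets constrain how many \emph{other} points can be frequent, producing the savings missing from the crude double count.

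The cases $j = 0, 1$ are essentially trivial. The main obstacle is $j \in \{2,3,4\}$, and especially $j \in \{3,4\}$: the constructions promised in the abstract show that the theorem's bounds are tight, so any proof must use the full strength of intersection-closedness and cannot come from any clean global inequality. Concretely, I would split on the multiset of sizes of the non-empty members of $\bar\Ff$ and, within each split, on how the would-be non-abundant points distribute among these small sets; the technical heart is to show that assuming one more non-abundant point than the theorem allows forces an intersection pattern that either violates intersection-closedness or makes $N$ so small that plain double counting already suffices. The $j = 4$ case will be the most intricate, as it permits the richest intersection patterns and the extremal ``precisely $k-1$ abundant elements'' families sit right at its boundary, leaving essentially no slack in the argument.
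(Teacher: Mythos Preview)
Your argument for part~(3) is complete and essentially identical to the paper's: both are the obvious double count, yours phrased via the complement counts $\bar d(x)$ and the paper's via $s=\sum_{A\in\Ff}|A|$, with the strict inequality in each case coming from the single set $M$.

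For parts~(1) and~(2), however, you give only a plan, and it is missing the organising device that makes the argument finite. A split ``on the multiset of sizes of the non-empty members of $\bar\Ff$'' is not a finite case analysis, since $|\bar\Ff|$ is unbounded, and the sizes alone do not locate the non-abundant points. What the paper does instead is fix a set $X$ of exactly $j+1$ (for part~(1)) or $6$ (for part~(2)) putatively non-abundant elements --- those appearing in at least half of $\bar\Ff$ --- and count members $A$ of $\bar\Ff$ by their \emph{rank} $r(A)=|A\cap X|$. The frequency hypothesis on $X$ becomes $\sum_i i\,r_i \geq \tfrac{|X|}{2}\sum_i r_i$; since ranks lie in $\{0,\dots,j\}$ and $r_0\geq 1$, this is a genuinely finite constraint on the vector $(r_0,\dots,r_j)$. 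For $j\leq 3$ the few surviving profiles are then eliminated by observing that intersection-closedness forces, for instance, many rank-$1$ subsets of $X$ into $\bar\Ff$ once enough rank-$j$ sets are present, which wrecks the inequality.

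For $j=4$ the rank inequality alone no longer suffices, and the paper brings in a second idea absent from your sketch: an element $a$ is \emph{dominated} by $b$ if every member of $\bar\Ff$ containing $a$ also contains $b$, and one shows that any $a$ with $\{a\}\notin\bar\Ff$ is dominated. Dominated pairs inside $X$ sharply restrict which rank-$4$ sets are ``relevant'', and a refined version of the rank inequality involving only relevant sets then closes the argument. Your suggested alternative --- forcing $N$ small enough that plain double counting finishes --- is not what actually happens; the paper's contradictions are purely in the $r_i$'s and work for arbitrary $N$.
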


The proof of Theorem~\ref{abundance} is given in Section~\ref{conditions}.
We complement the result of Theorem~\ref{abundance}
by constructing families whose smallest set is of size $k$,
but fewer than $k$ elements satisfy the statement of Conjecture~\ref{frankl}.
In particular, we show
that the difference of $k$ and $f$ can be arbitrarily large.
The properties of the present constructions are summarized as follows.

\begin{theorem}
\label{counterexamples}
We say that a union-closed family $\Ff$ of sets is an $(f,k,n)$-construction
if
there are precisely $f$ elements each of which belongs to more than half of the sets of $\Ff$
and $k$ and $n$ are the sizes of a smallest and the largest set of $\Ff$, respectively.
The following statements are satisfied.
\begin{enumerate}
\item
There is a $(2,3,8)$-construction, $(3,4,9)$-construction, $(4,5,9)$-construction and $(5,6,10)$-construction.
The constructions are twin-free.
\item
There is a twin-free $(2,k,n)$-construction for every $k$ and $n$ which satisfy
${\sum_{i=k-1}^{\lfloor \frac{n}{2} \rfloor - 1} \binom{\lfloor \frac{n}{2} \rfloor - 1}{i}
>
\binom{n-3}{k-3} + \binom{\lfloor \frac{n}{2} \rfloor -2}{k-2}}.$
This inequality holds for every $k$ and large enough~$n$.  
\item
There is a $(2,k,n)$-construction for every $k$ and $n$ satisfying ${n \geq \max \{ 3, 5k - 4 \}}$.
Furthermore, $n \geq \max \{ 3, 5k - 8 \}$ suffices for even $k$.
\item
There is a $(k - 1,k,n)$-construction for every $k$ and $n$ satisfying $n - 4 \geq k \geq 3$ and $n \geq 9$.
\end{enumerate}
\end{theorem}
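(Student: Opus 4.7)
The plan is to provide an explicit construction for each of the four parts. For each part, I identify a ground set of size $n$, designate $f$ elements as the intended abundant ones, specify a generating family $\Gg$, and take $\Ff$ to be its union-closure. Verification then reduces to checking smallest and largest set sizes, enumerating the sets of $\Ff$ to count element occurrences, and (for Parts~(1)--(2)) confirming that no two elements are twins. Part~(1) is handled by exhibiting the four small families directly --- either by listing all their sets or by giving a short generating family and computing its closure --- so that all verifications become finite calculations.

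For Parts~(2) and~(3), I would use a template with two favored elements $a,b$ on the ground set $[n]$. The generating family combines a collection of sets all containing $\{a,b\}$ with a carefully chosen ``dilution'' collection of sets avoiding $\{a,b\}$; the latter is calibrated to pull every non-favored element below the half-threshold while keeping $a$ and $b$ above it. In Part~(2), a distinguished subset $A \subseteq [n]$ of size about $\lfloor n/2 \rfloor$ organizes the counting, and the stated binomial inequality is exactly the condition that, after union-closure, $a$ and $b$ strictly exceed every other element in degree. In Part~(3), a more size-regular generating family should yield the sharper linear condition $n \geq 5k-4$ (and the variant $n \geq 5k-8$ for even $k$ should come from a minor parity-driven refinement of the same template).

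For Part~(4), I would use an analogous template with $k-1$ favored elements $A = \{a_1,\ldots,a_{k-1}\}$. The naive choice of taking all sets $A \cup S$ with $\emptyset \neq S \subseteq B := [n] \sm A$ does not work, since each $b \in B$ then appears in $2^{|B|-1}$ of $2^{|B|}-1$ sets and is itself abundant. To fix this, I would introduce a small family of sets not containing all of $A$ --- say of the form $(A \sm \{a_i\}) \cup T$ for a few well-chosen $T \subseteq B$, possibly together with some sets entirely inside $B$ of size $\geq k$ --- tuned so that the total count grows just enough that every $b$ falls below half while every $a_i$ remains above it. The restrictions $n - 4 \geq k$ and $n \geq 9$ should reflect the smallest parameters for which such a balance is simultaneously achievable.

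The main obstacle throughout is the bookkeeping after union-closure: unions of generators create many new sets, and one must describe $\Ff$ precisely enough to compute, for each element, the number of sets containing it. In Parts~(2) and~(3) this is the core difficulty and is what forces the stated numerical conditions, which I expect to fall out as the exact inequalities $\deg(a) > \deg(b')$ for $b'$ non-favored. In Part~(4) the additional challenge is that each $a_i$ must only barely exceed the half-threshold, leaving very little slack; verifying the upper bounds on the $b$-degrees --- and making sure the dilution sets do not accidentally push any single $b$ above half --- will be the most delicate step.
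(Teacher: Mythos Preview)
Your proposal is a strategy sketch rather than a proof, and while its high-level shape for Parts~(1) and~(2) aligns with the paper, for Parts~(3) and~(4) it misses the key device that makes the paper's argument short and clean: \emph{twin-cloning}. The paper does not build Parts~(3) and~(4) from scratch; instead it bootstraps from the four explicit small families of Part~(1). Concretely, if $\Ff$ is an $(f,k,n)$-construction and one adds a fresh element to every set of $\Ff$, one obtains an $(f+1,k+1,n+1)$-construction; if one instead adds the fresh element only to those sets containing a fixed non-abundant element $x$, the new element becomes a twin of $x$ and one obtains an $(f,k,n+1)$-construction (with the new element also non-abundant). Iterating these two moves on $\Pp^8_3$, $\Qq^9_5$, $\Rr^{10}_6$ immediately yields Part~(4), and a more elaborate cloning of $\Pp^{12}_4$ (distributing blocks of clones across several non-abundant elements) gives Part~(3) with the exact bound $5k-4$. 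For Part~(2) the paper's construction $\Pp^n_k$ is close to what you describe, except that the ``dilution'' families $\Ee$ and $\Oo$ each contain exactly one of the two favoured elements rather than avoiding both; this is what makes the family union-closed while keeping elements $0$ and $1$ abundant.

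Your from-scratch plan for Part~(4) is where the gap is most serious. Starting with all sets $A\cup S$, $\emptyset\neq S\subseteq B$, and then sprinkling in sets of the form $(A\setminus\{a_i\})\cup T$ must simultaneously keep every $a_i$ above half, push every $b\in B$ below half, maintain union-closedness, and keep the smallest set of size exactly $k$. You acknowledge this is ``the most delicate step'' but give no mechanism for achieving the balance, nor any reason why the specific thresholds $k\geq 3$, $n\geq k+4$, $n\geq 9$ would emerge. In the paper these thresholds are not the output of a calculation at all: they are simply the parameters of the four base families from Part~(1), and the cloning reduction propagates them automatically. Without the cloning idea (or an equally concrete alternative), your plan for Parts~(3) and~(4) remains a hope rather than a proof.
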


The proof of Theorem~\ref{counterexamples} and details on the constructions
are given in Section~\ref{families}.
Regarding the bounds shown in Theorem~\ref{abundance},
we note that the inequality $f \geq k$ in statement (1) it tight 
(this is easy to see),
and the inequality $f \geq k - 1$ in statement (2) is also tight
(this follows from item (4) of Theorem~\ref{counterexamples}),
and there likely is an ample room for improvement of statement (3) of Theorem~\ref{abundance}.

In~particular,
statements (1) of Theorem~\ref{abundance} and (4) of Theorem~\ref{counterexamples}
resolve the questions of~\cite{CH} for every $k$ greater than $2$.
We leave the case $k = 2$ open and show that  
it stands between Conjectures~\ref{frankl} and~\ref{poonen3} as follows.

\begin{proposition}
\label{hierarchy}
Each of the following implications holds.
\begin{enumerate}
\item
Conjecture~\ref{poonen4} implies Conjecture~\ref{poonen3}.
\item
Conjecture~\ref{poonen3} implies Conjecture~\ref{CuiHuS2}.
\item
Conjecture~\ref{CuiHuS2} implies Conjecture~\ref{frankl}.
\end{enumerate}
\end{proposition}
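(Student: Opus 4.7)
The plan is to handle each of the three implications by contraposition, using a short construction in each case. Implication (3) is the simplest: given $\Ff$ witnessing failure of Conjecture~\ref{frankl}, I would adjoin a fresh element $x$ to every set of $\Ff$ to form $\Ff' = \{A \cup \{x\} : A \in \Ff\}$. A direct check shows that $\Ff'$ is union-closed, satisfies $\emptyset \not\in \Ff'$, has smallest set of size at least $2$, and has $x$ as its unique abundant element (since $x$ lies in every set of $\Ff'$ while each $y \neq x$ has the same degree in $\Ff'$ as in $\Ff$). Thus $\Ff'$ refutes Conjecture~\ref{CuiHuS2}.

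For (2), I would start from $\Ff$ witnessing the failure of Conjecture~\ref{CuiHuS2}, so $\Ff$ has smallest set of size at least $2$ and at most one abundant element. If $\Ff$ has exactly one abundant element, Conjecture~\ref{poonen3} forces this element into every set of $\Ff$, and removing it from every set yields a union-closed family $\Ff'$ with $\emptyset \not\in \Ff'$ and no abundant element; in the remaining case $\Ff' = \Ff$ already has these properties. Introducing a fresh element $x$, I would then form
\[
\Gg = \Ff' \cup \{A \cup \{x\} : A \in \Ff'\} \cup \{\{x\}\}.
\]
This $\Gg$ is union-closed, satisfies $\emptyset \not\in \Gg$, and contains $2|\Ff'|+1$ distinct sets. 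The element $x$ belongs to $|\Ff'|+1$ of these sets, so $x$ is abundant yet does not belong to every set of $\Gg$. For each $y \neq x$, the degree satisfies $d_\Gg(y) = 2\, d_{\Ff'}(y)$, and since $2d_{\Ff'}(y)$ is an even integer while $|\Gg|/2$ is half-integer, a short arithmetic check identifies abundance of $y$ in $\Gg$ with abundance of $y$ in $\Ff'$. As $\Ff'$ has no abundant element, $x$ is the unique abundant element of $\Gg$, and $\Gg$ refutes Conjecture~\ref{poonen3}.

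For (1), given $\Ff$ satisfying the hypothesis of Conjecture~\ref{poonen3}, I would first observe that twinhood, extended by reflexivity, is an equivalence relation on the ground set of $\Ff$; transitivity follows from the fact that each $A \in \Ff$ contains either both or neither element of any twin pair. I would pick one representative from each twin class, denote the set of representatives by $R$, and set $\Ff^* = \{A \cap R : A \in \Ff\}$. The twin condition makes $A \mapsto A \cap R$ a bijection onto $\Ff^*$, and $\Ff^*$ is a twin-free union-closed family with $\emptyset \not\in \Ff^*$; moreover, every $a \in R$ has the same degree in $\Ff$ as in $\Ff^*$. Choosing the unique abundant element $x$ of $\Ff$ as the representative of its class, $\Ff^*$ also has $x$ as its unique abundant element. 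Conjecture~\ref{poonen4} then forces $x$ into every set of $\Ff^*$, which pulls back to $x$ lying in every set of $\Ff$.

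I expect the most delicate step to be verifying the properties of $\Gg$ in (2), in particular ruling out any element outside $\{x\}$ becoming abundant due to the odd cardinality of $\Gg$; the twin reduction in (1) is essentially bookkeeping once twinhood is recognized as an equivalence relation, and implication (3) is immediate.
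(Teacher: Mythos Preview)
Your proof is correct. Part~(1) is essentially the paper's approach: both reduce to a twin-free family and invoke Conjecture~\ref{poonen4}, the only difference being that the paper strips twins one at a time while you quotient by the twin equivalence relation in a single pass.

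Parts~(2) and~(3), however, take a different route. For~(3) the paper argues directly, splitting on whether a smallest set has size~$1$ (handled by the classical injection $A \mapsto A \cup \{x\}$) or at least~$2$; you instead argue contrapositively by adjoining a fresh element to every set. For~(2) the paper also argues by contradiction but with different, more case-heavy modifications: it removes the abundant element from a \emph{single} smallest set in one case, and in the no-abundant case forces a chosen existing element into many sets via several subcases. Your approach is cleaner and more uniform, reducing both cases to a family $\Ff'$ with no abundant element and then applying a single construction $\Gg$. The trade-off is that your constructions in~(2) and~(3) introduce a fresh element and hence enlarge the largest set; the paper's modifications stay inside the original ground set, which is precisely what allows the paper to upgrade Proposition~\ref{hierarchy} to the uniform-in-$n$ statement of Proposition~\ref{relationsAndN}. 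One small phrasing point in your~(2): invoking Conjecture~\ref{poonen3} mid-argument makes it a proof by contradiction rather than the announced contraposition; equivalently, just observe that if the unique abundant element is not in every set then $\Ff$ itself already refutes Conjecture~\ref{poonen3}, and otherwise proceed as you do.
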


The proof of Proposition~\ref{hierarchy} is given in Section~\ref{relations}.
We should say that there is a minor difference between the present formulations of
Conjectures~\ref{frankl}, \ref{poonen3}, \ref{poonen4} and~\ref{CuiHuS2}
and formulations of these conjectures in the literature.
For the sake of completeness,
we recall the common formulations in Section~\ref{relations}
and show that each is equivalent to the respective present formulation. 

This minor difference is that
Conjectures~\ref{frankl}, \ref{poonen3}, \ref{poonen4} and~\ref{CuiHuS2}
are commonly formulated for union-closed families possibly containing the empty set
and consider elements which belong to at least half of the sets (with a non-strict inequality).
In particular,
the investigation of~\cite{CH} concerns the size of a smallest non-empty set,
and the original formulations in~\cite{P}
state that the considered element belongs to all non-empty sets.  
However,
it seems more natural to state the conjectures in the present form
for union-closed families which do not contain the empty set,
and accordingly 
we say that an element is \emph{abundant} for a family~$\Ff$
if it belongs to more than half of the sets of~$\Ff$ (we use a strict inequality).  

We also note that the results of Theorems~\ref{abundance} and~\ref{counterexamples}
trivially translate to the original setting of~\cite{CH}.
In particular,
if $\Ff$ is a union-closed family such that $\emptyset \in \Ff$,
then we apply Theorem~\ref{abundance} to the family $\Ff \sm \emptyset$
and obtain $f$ elements which belong to more than half of the sets of $\Ff \sm \emptyset$,
and we conclude that each belongs to at least half of the sets of $\Ff$.
Furthermore,
for each of the triples $(f,k,n)$ considered in Theorem~\ref{counterexamples} distinct from $(4,5,9)$, 
we present an $(f,k,n)$-construction such that
each of the remaining $n-f$ elements belongs to strictly less than half of its sets.
Hence, the constructions are slightly stronger than needed,
and thus relevant in the present setting and also in the original setting of~\cite{CH}.
(For the $(4,5,9)$ triple, we can simply add the empty set to the
$(4,5,9)$-construction used and conclude that
$5$ elements belong to strictly less than half of the sets of the modified construction.)
%
%
\section{Sufficient conditions for many abundant elements}
\label{conditions}
In the present section, we prove Theorem~\ref{abundance}.
We note that statement (3) of Theorem~\ref{abundance} is shown by a simple averaging argument
and statements (1) and (2) are shown by slightly more involved double counting arguments.
For the sake of simplicity, we view the families
from a dual perspective in the proofs of statements (1) and (2).
We start by showing statement (3).
\begin{proof}[Proof of Theorem~\ref{abundance}]
We show statement (3).
Clearly, if $k = n$ then $f = n \geq \min \{ n, 2k - n + 1 \}$ as desired.
Hence, we can assume that $k \leq n - 1$ and we need to show that $f \geq 2k -n + 1$.
We let $m$ be the number of sets in family $\Ff$
and $s = \sum_{A \in \Ff}|A|$.
We note that $s > km$ (since $k \leq n - 1$).
On the other hand, we observe that
$fm + (n-f)\frac{m}{2} \geq s$
(since $n-f$ elements belong to at most half of the sets of $\Ff$).
We combine the inequalities and get
$\frac{n+f}{2} > k$, and it follows that
$f \geq 2k -n + 1$.

Next, we show statement (1).
For the sake of a contradiction,
we suppose that there is a union-closed family $\Ff$
whose largest set, say $M$, is of size $n$ and smallest set is of size $k$
so that $n \leq k + 3$, and that there are at least $n-k+1$ elements of $M$
such that each of these elements belongs to at most $\frac{1}{2}|\Ff|$ sets of $\Ff$
(where $|\Ff|$ is the number of sets in $\Ff$).   
We consider a set of precisely $n-k+1$ such elements, and
we let $X$ denote this set and let $d = n-k$.

We consider the dual family $\Dd$ defined as follows.
A set $A$ belongs to $\Dd$ if and only if the set $M \sm A$ belongs to $\Ff$.
The definition yields that $|\Dd| = |\Ff|$ and $\emptyset \in \Dd$ and a largest set of $\Dd$ is of size $d$
and each element of $X$ belongs to at least $\frac{1}{2}|\Dd|$ sets of~$\Dd$.
Furthermore, we observe that $\Dd$ is closed under taking intersections
(since $\Ff$ is union-closed).

For each set $A$ of $\Dd$, we define its \emph{rank} as $r(A) = |A \cap X|$.
By the assumption on the elements of $X$,
we note that the sum of ranks taken over all sets of $\Dd$ is at least $\frac{d+1}{2}|\Dd|$
(since $|X| = d+1$).
In other words, the average rank is bounded from below as follows.
\begin{equation}
\label{eq:1}
\frac{ \sum_{A \in \Dd} r(A) }{|\Dd|} \geq \frac{d+1}{2}
\end{equation}
We note that $r(A) \leq d$ for every $A$.
For every $i$ of $\{0,1,\dots,d\}$,
we let $r_i$ denote the number of sets of $\Dd$ whose rank is equal to $i$. 
Now, we can rewrite inequality~\eqref{eq:1} and obtain the following.
\begin{equation}
\label{eq:2}
\frac{ \sum_{i=0}^d i \cdot r_i }{\sum_{i=0}^d r_i} \geq \frac{d+1}{2}
\end{equation}
We use inequality~\eqref{eq:2} and the fact that
$r_0 \geq 1$, and we observe the following.
\begin{itemize}
\item
$d \geq 2$.
\item
If $d = 2$ then $r_2 \geq 3$.
\item
If $d = 3$ then $r_3 \geq 2$.
\end{itemize}
On the other hand, we note that $r_d \leq d + 1$ (since $\binom{d+1}{d} = d+1$).
If $r_d = d + 1$, then we observe that $r_1 = d + 1$
(since $\Dd$ contains all possible sets of rank $d$ and $\Dd$ is intersection-closed),
and we use that $d \leq 3$ and obtain a contradiction with inequality~\eqref{eq:2}.

Hence, we can assume that $d = 3$ and $2 \leq r_3 \leq 3$,
and we discuss two cases.
For the case that $r_3 = 2$,
inequality~\eqref{eq:2} implies that $r_1 = 0$.
We consider the two sets of rank $3$ in $\Dd$ and let $I$ denote their intersection
(and note that $|I| = r(I) = 2$). 
We observe that every set of positive rank in $\Dd$ contains $I$ as a subset 
(since $r_1 = 0$ and $\Dd$ is intersection-closed).
It follows that some element of $X$ belongs to only one set of $\Dd$,
a contradiction.
For the case that $r_3 = 3$,
we observe that $r_1 = 1$
(inequality~\eqref{eq:2} implies that $r_1 \leq 1$,
and $r_1 \geq 1$ follows since $\Dd$ is intersection-closed),
and we let $x$ denote the element of $X$ which belongs to the set of rank $1$.
We observe that $x$ belongs to each of the three sets of rank $3$ in $\Dd$
(since $\{x\}$ is the only set of rank $1$ and $\Dd$ is intersection-closed),
and it follows that $x$ also belongs every set of rank $2$ in $\Dd$.
Finally, we consider the elements of $X \sm \{x\}$ and count their occurrences, 
and we conclude that some element of $X \sm \{x\}$ belongs to less than
$\frac{1}{2}|\Dd|$ sets of~$\Dd$, a contradiction.

Lastly, we show statement (2).
Similarly to the proof of item (1),
we define the dual family $\Dd$;
and we note that in the setting of item (2) a largest set of $\Dd$ has size~$4$.
For the sake of a contradiction, we suppose that there are at least 
$6$ elements each of which belongs to at least $\frac{1}{2}|\Dd|$ sets of $\Dd$.

In addition, we say that an element $a$ is \emph{dominated} by an element $b$  
if $b$ belongs to every set of $\Dd$ containing $a$
and $a \neq b$.
We show the following.

\begin{claim} 
\label{c1}
If set $\{a\}$ does not belong to $\Dd$,
then element $a$ is dominated.
\end{claim}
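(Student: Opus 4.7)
The plan is to exploit the intersection-closedness of $\Dd$, which is the dual counterpart of the union-closedness of $\Ff$ (inherited via the correspondence $A \leftrightarrow M \sm A$, exactly as in the proof of statement~(1)). Given the element $a$, I would consider the subcollection $\Cc_a = \{A \in \Dd : a \in A\}$. If $\Cc_a$ happens to be empty, then every $b \neq a$ dominates $a$ vacuously, and the claim holds trivially; so from now on I may assume $\Cc_a \neq \emptyset$.

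The main step is to form the intersection $I = \bigcap_{A \in \Cc_a} A$. Since $\Dd$ is closed under taking intersections of nonempty finite subcollections and $\Cc_a$ is such a subcollection, $I$ lies in $\Dd$; and by the very definition of $I$, we have $a \in I$. The key observation is then that the hypothesis $\{a\} \notin \Dd$ forces $I \neq \{a\}$, so there must exist some $b \in I$ with $b \neq a$. By construction this $b$ lies in every set of $\Cc_a$, that is, in every set of $\Dd$ containing $a$, so $b$ dominates $a$ in the sense defined just before the claim.

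I do not anticipate a real obstacle here: the claim is essentially an immediate application of intersection-closedness together with the hypothesis $\{a\} \notin \Dd$. The only mildly delicate points to watch out for are the trivial edge case in which $a$ belongs to no set of $\Dd$, and the correct reading of the definition so that \emph{dominated} is interpreted as \emph{dominated by some $b$}. The substantive use of the claim will presumably come later, when combined with the assumption that at least six elements each belong to at least $\tfrac{1}{2}|\Dd|$ sets of $\Dd$ to derive the desired contradiction.
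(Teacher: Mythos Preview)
Your proof is correct and uses essentially the same idea as the paper: both exploit that $\Dd$ is intersection-closed to produce a set in $\Dd$ that contains $a$ together with some other element. The paper phrases it via a minimal counterexample (take a smallest $A\in\Dd$ containing $a$ and derive a contradiction from non-domination), whereas you go directly by forming $I=\bigcap_{A\in\Cc_a}A$; since in an intersection-closed family the smallest set containing $a$ coincides with this intersection, the two arguments are really the same proof in two dressings.
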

\begin{proofcl}[Proof of Claim~\ref{c1}]
For the sake of a contradiction, we suppose that 
$\{a\}$ does not belong to $\Dd$, but
$a$ is not dominated by any element.
In particular, we have that $a$ belongs to some set of $\Dd$ 
(otherwise $a$ is dominated by every element).
We let $A$ be a smallest set of $\Dd$ containing $a$,
and we note that $|A| \geq 2$.
Since $a$ is not dominated,
we observe that $\Dd$ contains a set $B$
such that $a$ belongs to $B$
and $A$ is not a subset of~$B$.
We note that $a$ belongs to $A \cap B$,
and $A \cap B$ belongs to~$\Dd$
(since $\Dd$ is intersection-closed).
We conclude that $A \cap B$ is smaller than $A$,
which contradicts the choice of~$A$.
\end{proofcl}

We let $H$ denote the set of all elements which belong to at least $\frac{1}{2}|\Dd|$ sets of~$\Dd$.
We call a set $X$ a \emph{crew} 
if $X \subseteq H$ and $|X| = 6$.
%
For every set $A$ of $\Dd$,
we let $r(X,A) = |A \cap X|$.
Furthermore, we let $r_i(X)$ denote the number of sets $A$ of $\Dd$
such that $r(X,A) = i$.
We say that a set $A$ is \emph{relevant} for $X$ if
$A$ belongs to $\Dd$
and
$r(X,A) = 4$
and
$|A \cap \{a,b\}| \in \{0,2\}$ for every pair of elements $a, b$ of $X$ such that
$a$ is dominated by $b$.
We let $\Rr(X)$ be the family of all relevant sets for~$X$,
and we show the following.

\begin{claim} 
\label{c2}
For every crew $X$,
we have $|\Rr(X)| \geq r_2(X) + 2r_1(X) + 3$.
\end{claim}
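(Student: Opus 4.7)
The plan is to begin with a double-counting of incidences between $X$ and $\Dd$. Since every $x \in X$ lies in at least $|\Dd|/2$ sets of $\Dd$, summing over the six elements of $X$ gives
\[
r_1(X) + 2r_2(X) + 3r_3(X) + 4r_4(X) \;\ge\; 6 \cdot \tfrac{1}{2}|\Dd| \;=\; 3\bigl(r_0(X) + r_1(X) + r_2(X) + r_3(X) + r_4(X)\bigr),
\]
which rearranges to $r_4(X) \ge 3 r_0(X) + 2 r_1(X) + r_2(X)$. Because $M \in \Ff$ implies $\emptyset \in \Dd$, we have $r_0(X) \ge 1$, so already $r_4(X) \ge 2 r_1(X) + r_2(X) + 3$. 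Since every set of $\Dd$ has size at most four, any $A \in \Dd$ with $r(X,A) = 4$ is automatically a $4$-subset of $X$, so $r_4(X)$ counts exactly the $4$-subsets of $X$ belonging to $\Dd$.

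The remaining task is to show that enough of these $4$-subsets are relevant. If no two elements of $X$ stand in a domination relation, every rank-$4$ set is automatically relevant and the displayed inequality already yields Claim~\ref{c2}. For the general case, the plan is to use the converse of Claim~\ref{c1}: a dominated element $a$ satisfies $\{a\} \notin \Dd$, because any set containing $a$ also contains its dominator. For each $X$-dominated $a \in X$ with $X$-dominator $b$, a non-relevant rank-$4$ set $A \subseteq X$ must contain $b$ but omit $a$; intersecting such $A$ with any set of $\Dd$ containing $a$ (which necessarily also contains $b$) then forces the existence of additional sets of rank strictly less than $4$ inside $\Dd$.

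The key step, and where I expect the main obstacle to lie, is converting these structural witnesses into an injective charging that assigns each non-relevant rank-$4$ set to a distinct set counted by $r_0(X) + r_1(X) + r_2(X) + r_3(X)$, so that the slack in $r_4(X) \ge 3r_0(X) + 2r_1(X) + r_2(X)$ absorbs exactly the loss from non-relevance, leaving $|\Rr(X)| \ge 2r_1(X) + r_2(X) + 3$. I anticipate this will require a short case analysis by the number of $X$-dominated elements and by whether different dominated elements share dominators, together with careful use of intersection-closedness both to produce the lower-rank witnesses and to argue that distinct non-relevant rank-$4$ sets produce distinct witnesses.
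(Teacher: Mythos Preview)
Your opening double count is exactly right and matches the paper: from $\sum_{A\in\Dd} r(X,A)\ge 3|\Dd|$ you obtain $r_4(X)\ge 3r_0(X)+2r_1(X)+r_2(X)$, and since $\emptyset\in\Dd$ gives $r_0(X)\ge1$, this already yields $r_4(X)\ge r_2(X)+2r_1(X)+3$. The remaining task, bounding the number $N$ of non-relevant rank-$4$ sets, is indeed the whole difficulty.

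Here your plan diverges from the paper and runs into a real obstacle. You propose to charge each non-relevant rank-$4$ set injectively to a set of rank at most $3$, producing these witnesses via intersections. But even if such an injection existed, it would only give $N\le r_0(X)+r_1(X)+r_2(X)+r_3(X)$, and this does \emph{not} combine with $r_4(X)\ge 3r_0(X)+2r_1(X)+r_2(X)$ to yield $|\Rr(X)|=r_4(X)-N\ge r_2(X)+2r_1(X)+3$: the coefficients simply do not match up, and $r_3(X)$ appears on the wrong side. Moreover, the intersection $A\cap B$ you describe (with $B\ni a$) need not have rank below $4$, and different non-relevant $A$'s can easily produce the same intersection, so injectivity is not at all clear. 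The case analysis you anticipate would have to be substantially more delicate than ``short''.

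The paper avoids this entirely by staying on the analytic side. It names the slack $w(X)=r_4(X)-r_2(X)-2r_1(X)-3r_0(X)\ge 0$ and observes that it suffices to prove $w(X)\ge N$, since then $|\Rr(X)|=r_4(X)-N\ge r_4(X)-w(X)=r_2(X)+2r_1(X)+3r_0(X)\ge r_2(X)+2r_1(X)+3$. The point is that $w(X)$ equals $\sum_{x\in X}\bigl(\mathrm{count}(x)-\tfrac12|\Dd|\bigr)$, the total ``excess'' of the six elements. For a non-relevant $A$ with witnessing pair $(a,b)$ (so $a$ is dominated by $b$, $b\in A$, $a\notin A$), every set of $\Dd$ containing $a$ also contains $b$; since $a\in H$ there are at least $\tfrac12|\Dd|$ such sets, and $A$ is a further set containing $b$ but not $a$. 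Thus each non-relevant $A$ accounts for one unit of excess at its dominator $b$, giving $w(X)\ge N$ with no case analysis and no appeal to intersection-closedness. This is the missing idea in your plan: charge non-relevant sets directly against the element-wise excess, not against lower-rank sets.
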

\begin{proofcl}[Proof of Claim~\ref{c2}]
We consider an arbitrary crew $X$.
The definition of $X$ yields that
\begin{equation*}
\sum_{A \in \Dd} r(X, A) \geq 6 \cdot \frac{1}{2} |\Dd| = 3|\Dd|.
\end{equation*}
In addition, we define
$w(X) = \sum_{A \in \Dd} r(X, A) - 3|\Dd|$ and note that $w(X)$ is non-negative.
In other words,
we consider the elements of $X$ which occur in more than half of the sets of $\Dd$
and let $w(X)$ account for the total number of these additional occurrences.  
We observe that
\begin{equation*}
\begin{split}
w(X) &= \sum_{A \in \Dd} r(X, A) - 3|\Dd|
	= \sum_{A \in \Dd} \left(r(X, A) - 3 \right)
	= \sum_{i=0}^4 (i-3)r_i(X)\\
	&= r_4(X) - r_2(X) - 2r_1(X) - 3r_0(X)
\end{split}
\end{equation*}
where the third equality follows from the fact that a largest set of $\Dd$ is of size~$4$.
We use that $r_0(X) \geq 1$ and obtain
\begin{equation*}
r_4(X) - w(X) \geq r_2(X) + 2r_1(X) + 3.
\end{equation*}

It remains to show that 
$|\Rr(X)| \geq r_4(X) - w(X)$.
We rewrite the inequality as  
$w(X) \geq r_4(X) - |\Rr(X)|$
and note that the right-hand side accounts for the number of sets $A$ of $\Dd$
such that $r(X, A) = 4$ and $A$ does not belong to $\Rr(X)$.
It suffices to show that each such set contributes at least $1$ to $w(X)$.
To this end, we consider an arbitrary set $A$ of $\Dd \sm \Rr(X)$ satisfying $r(X, A) = 4$.
The definition of $\Rr(X)$ implies that
there is pair of elements $a, b$ of $X$ such that
$a$ is dominated by $b$ and $|A \cap \{a,b\}| = 1$.
We observe that $b$ belongs to~$A$
(since $A$ belongs to $\Dd$ and $a$ is dominated by $b$).
Furthermore, the facts that $a$ belongs to $X$ and $a$ is dominated by $b$
yield that at least $\frac{1}{2}|\Dd|$ sets of $\Dd$
contain $\{a,b\}$ as a subset.
Thus, we can say that $A$ contributes at least $1$ to $w(X)$
(since $A$ contains $b$ but not $a$).
It follows that $w(X) \geq r_4(X) - |\Rr(X)|$,
which concludes the proof of the claim.
\end{proofcl}

As the last claim, we show the following.

\begin{claim} 
\label{c3}
$H$ contains at least four elements $e$ such that $\{e\}$ does not belong to $\Dd$.
\end{claim}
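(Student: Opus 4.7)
I would prove the contrapositive: supposing fewer than four elements of $H$ satisfy $\{e\} \notin \Dd$, I would derive a contradiction. By Claim~\ref{c1}, together with the observation that $\{a\} \in \Dd$ forces $a$ to be non-dominated (any dominator $b \neq a$ would have to lie in $\{a\}$), the elements of $H$ with $\{e\} \in \Dd$ are exactly the non-dominated elements of $H$; the assumption yields at least $t := |H| - 3 \geq 3$ non-dominated elements of $H$.

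Next I would pick a crew $X \subseteq H$ of size $6$ containing $t' := \min(t, 6) \geq 3$ non-dominated elements of $H$, completing it with further elements of $H$ if $t' < 6$ (which is possible since $|H| \geq 6$). The singletons of those $t'$ non-dominated elements lie in $\Dd$ and have rank $1$ in $X$, so $r_1(X) \geq t'$, and Claim~\ref{c2} yields $|\Rr(X)| \geq 2t' + 3$. The plan is to contradict this by an upper bound on $|\Rr(X)|$.

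If $t' = 6$, the lower bound matches $\binom{6}{4} = 15$, so $\Rr(X)$ must consist of every size-$4$ subset of $X$; intersecting two such subsets meeting in exactly two elements yields a rank-$2$ set in $\Dd$ (as $\Dd$ is intersection-closed), so $r_2(X) \geq 1$, contradicting the tight case $r_2(X) = 0$ forced by Claim~\ref{c2}. If $t' \in \{3, 4, 5\}$, I would first observe that every dominator of every dominated element of $X$ must itself lie in $X$: each $A \in \Rr(X)$ satisfies $|A| = 4 = |A \cap X|$, hence $A \subseteq X$, so a dominator outside $X$ would force its dominated element out of every relevant set, leaving $|\Rr(X)| \leq \binom{5}{4} = 5 < 2t' + 3$. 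Letting $G$ be the graph on $X$ whose edges are the dominance pairs within $X$, the relevance condition $|A \cap \{a, b\}| \in \{0, 2\}$ propagates along edges and forces every $A \in \Rr(X)$ to be a union of connected components of $G$, with each of the $6 - t'$ dominated elements of $X$ lying in a non-trivial component.

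The final step is a short enumeration: for each $t' \in \{3, 4, 5\}$, one checks that the number of size-$4$ subsets of $X$ arising as unions of components of $G$ is at most $3$, $3$, and $7$, respectively (the extremal component structures being $(2, 2, 2)$, $(2, 2, 1, 1)$, and $(2, 1, 1, 1, 1)$), each strictly less than $2t' + 3 \in \{9, 11, 13\}$. I expect the main obstacle to be correctly setting up the reduction to unions of components of $G$ (together with the observation that all dominators of dominated elements of $X$ must lie in $X$); once this reduction is in place, the case-by-case enumeration over component structures is a routine check.
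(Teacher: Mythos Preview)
Your argument is sound in outline, but the enumeration for $t' = 4$ slips: the two dominated elements of $X$ can dominate each other (so they are twins), giving the component structure $(2,1,1,1,1)$ with both dominated elements in the size-$2$ block, and then $\binom{4}{2} + \binom{4}{4} = 7$ size-$4$ unions of components arise, not $3$. You implicitly assumed distinct dominated elements sit in distinct non-trivial components, which is not guaranteed. The repair is harmless, since $7 < 2 \cdot 4 + 3 = 11$, so once the bound for $t' = 4$ is corrected to $7$ your contradiction stands and the proof goes through.

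Your route is genuinely different from the paper's. The paper argues directly rather than by contrapositive: it first shows $r_1(X) \leq 5$ for an arbitrary crew $X$ (the case $r_4(X) = 15$ forces $r_2(X) \geq 15$ by intersection-closure, contradicting Claim~\ref{c2}), so some $a \in X$ has $\{a\} \notin \Dd$; then it picks a dominator $b$ of $a$, forms a new crew $X'$ containing both $a$ and $b$, and the single dominance pair already gives $|\Rr(X')| \leq \binom{4}{2} + \binom{4}{4} = 7$, whence $r_1(X') \leq 2$ by Claim~\ref{c2} and $X'$ itself exhibits four elements with $\{e\} \notin \Dd$. Your approach is more structural---building the dominance graph $G$ and enumerating its component shapes---and proves the contrapositive by showing every crew is bad; the paper instead engineers one good crew. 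The paper's proof is shorter; yours makes the combinatorial obstruction more transparent.
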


\begin{proofcl}[Proof of Claim~\ref{c3}]
We consider an arbitrary crew $X$.
We note that $r_4(X) \leq 15$
(since $|X| = 6$ and a largest set of $\Dd$ has size~$4$).
However if $r_4(X) = 15$,
then the intersection-closed property implies that $r_2(X) \geq 15$,
and we use the fact that $r_4(X) \geq |\Rr(X)|$
and obtain a contradiction with Claim~\ref{c2}.

Hence, we can assume that $r_4(X) \leq 14$, and thus $r_1(X) \leq 5$ by Claim~\ref{c2}.
In~particular, there is an element $a$ of $X$ 
such that $\{a\}$ does not belong to $\Dd$.
By Claim~\ref{c1}, element $a$ is dominated,
and we let $b$ be an element dominating $a$.
Clearly, $b$ belongs to $H$,
and we consider a crew $X'$ containing
$a$ and $b$.
We use the definition of $\Rr(X')$ and observe that $\Rr(X') \leq 7$.
Consequently,
Claim~\ref{c2} yields that $r_1(X') \leq 2$,
and thus $X'$ contains at most two elements $e$ such that $\{e\}$ belongs to~$\Dd$.
The desired statement follows.
\end{proofcl}

With Claims~\ref{c1},~\ref{c2} and~\ref{c3} on hand, we conclude the proof as follows.
Claim~\ref{c3} gives an element $a$ of $H$ such that the set 
$\{a\}$ does not belong to $\Dd$.
By Claim~\ref{c1},
there is an element $b$ dominating $a$, and we note that $b$ also belongs to $H$.
We consider the set $A$ of all elements of $H$
which are dominated by $b$,
and we discuss two cases based on $|A|$.

For the case that $|A| \geq 3$,
we consider a crew $X$ such that $b$ belongs to $X$ and $|A \cap X| \geq 3$.
We observe that each relevant set contains an element of $A \cap X$
(since $|X| = 6$ and $|A \cap X| \geq 3$),
and hence each relevant set contains $b$, and thus it contains all elements of $A \cap X$.  
The fact that $|(A \cap X) \cup \{b\}| \geq 4$ implies that
$|\Rr(X)| \leq 1$, and we obtain a contradiction with Claim~\ref{c2}.

For the case that $|A| \leq 2$,
we use that $|A \cup \{b\}| \leq 3$ and
Claim~\ref{c3} guarantees that there is an element $a'$ of $H \sm (A \cup \{b\})$ 
such that the set $\{a'\}$ does not belong to~$\Dd$.
In addition, we use Claim~\ref{c1} and consider an element $b'$ dominating $a'$,
and we observe that $b'$ is distinct from $a$ and $b$
(since $a'$ is not dominated  by $b$).
It follows that $a,b,a'$ and $b'$ are four distinct elements of $H$,
and we consider a crew $X$ which contains these four elements.
We note that the only possible relevant sets are
$\{a,b,a',b'\}$ and $X \sm \{a,b\}$ and $X \sm \{a',b'\}$.
We conclude that $|\Rr(X)| \leq 3$ (and if $|\Rr(X)| = 3$ then $r_2(X) \geq 3$),
and a contradiction with Claim~\ref{c2} follows.
\end{proof}


\section{Families with few abundant elements}
\label{families}
In the present section,
we construct various union-closed families of sets 
and we use the families for proving Theorem~\ref{counterexamples}. 

We let $\Pp^8_3$ be the family of sets constructed as follows.
We let $\Aa$ denote the family consisting of all sets $A$ such that
$A \subseteq \{0,1,\dots,7\}$ and
$\{0,1\} \subset A$ and
$|A| \geq 3$.
We let $\Ee = \{ \{0,2,4\}, \{0,2,6\}, \{0,4,6\}, \{0,2,4,6\} \}$
and
$\splitatcommas{ \Oo = \{ \{1,3,5\}, \{1,3,7\}, \{1,5,7\}, \{1,3,5,7\} \} }$,
and we let
$\Pp^8_3 = \Aa \cup \Ee \cup \Oo$.

Also, we extend $\Pp^8_3$ by adding element $8$ to every set,
and we let $\overline{\Pp^9_4}$ denote the resulting family. 

Next, we construct family $\Qq^9_5$ as follows.
We let $\Bb$ denote the family consisting of all sets $B$ such that
$B \subseteq \{0,1,\dots,8\}$ and
$\{0,1,\dots,5\} \subseteq B$.
We let $\Cc$ denote the family consisting of all sets $C$ such that
$C \subseteq \{0,1,2,3,6,7,8\}$ and
$\{0,1,2\} \subset C$ and
$|C| \geq 5$.
We let $\Dd = \{ \{0,1,3,4,5\}, \{0,2,3,4,5\}, \{1,2,3,4,5\} \}$
and
$\Qq^9_5 = \Bb \cup \Cc \cup \Dd$.

Lastly, we construct family $\Rr^{10}_6$.
We let $\Ff$ denote the family consisting of all sets $F$ such that
$F \subseteq \{0,1,\dots,9\}$ and
$\{0,1,\dots,4\} \subset F$ and
$|F| \geq 6$.
For every $i$ of $\{0,1,\dots,4\}$,
we let $G_i$ be the set depicted in Figure~\ref{setsG}
and $\Gg_i$ be the family consisting of $G_i$ 
and all subsets $G'_i$ of $G_i$ such that
$|G'_i| = 6$ and $|G'_i \cap \{0,1,\dots,4\}| = 4$.
For instance,
$\Gg_0 = \{ \{1,2,3,4,7,8\}, \{1,2,3,4,7,9\}, \{1,2,3,4,8,9\}, \{1,2,3,4,7,8,9\} \}$
We let
$\Rr^{10}_6 = \Ff \cup \Gg_0 \cup \Gg_1 \cup \dots \cup \Gg_4$.

\begin{figure}[h!]
    \centering
    \includegraphics[scale=0.65]{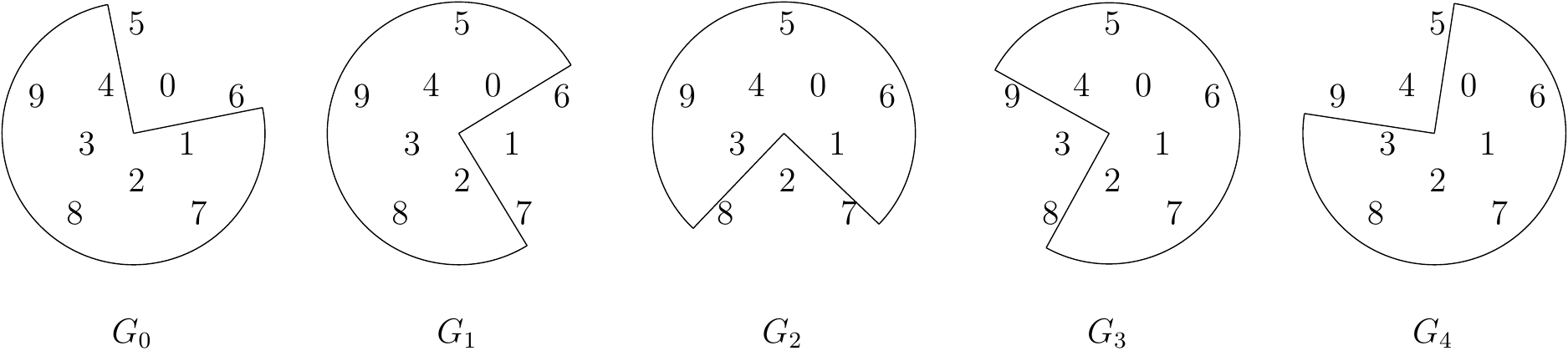}
    \caption{Sets $G_0,G_1,\dots,G_4$ viewed as subsets of $\{0,1,\dots,9\}$.
    For instance, $G_0 = \{1,2,3,4,7,8,9\}$.}
    \label{setsG}
\end{figure}

We show relevant properties of the constructions as follows.

\begin{proposition}
\label{constructions}
Let $\Pp^8_3$, $\overline{\Pp^9_4}$, $\Qq^9_5$ and $\Rr^{10}_6$ be the families of sets defined above.
Each of the families is twin-free and the following statements are satisfied.
\begin{enumerate}
\item
$\Pp^8_3$ is a $(2,3,8)$-construction and $0$ and $1$ are abundant.
\item
$\overline{\Pp^9_4}$ is a $(3,4,9)$-construction and $0, 1$ and $8$ are abundant.
\item
$\Qq^9_5$ is a $(4,5,9)$-construction and $0, 1, 2$ and $3$ are abundant.
\item
$\Rr^{10}_6$ is a $(5,6,10)$-construction and $0, 1, 2, 3$ and $4$ are abundant.
\end{enumerate}
Furthermore, each non-abundant element for $\Pp^8_3$ belongs to strictly less than half of the sets,
and similarly for $\overline{\Pp^9_4}$ and $\Rr^{10}_6$.
\end{proposition}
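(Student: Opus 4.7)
The plan is to verify the four families one at a time by direct inspection, since each is built from a short list of explicit pieces whose properties can be read off from the definitions. For each of $\Pp^8_3$, $\overline{\Pp^9_4}$, $\Qq^9_5$ and $\Rr^{10}_6$, I would carry out in turn: union-closedness; a count of the total number of sets together with the sizes of a smallest and the largest set; for each element, the number of sets containing it, so as to identify the abundant ones and compare against $|\Ff|/2$; and twin-freeness by exhibiting, for every pair of elements, some set containing exactly one of them.

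For union-closedness, every family decomposes as an upward-closed piece (namely $\Aa$, $\Bb$, or $\Ff$) sitting above a fixed base set, together with a short list of exceptional sets ($\Ee \cup \Oo$, $\Cc \cup \Dd$, or $\Gg_0 \cup \dots \cup \Gg_4$). Unions within the upward-closed piece belong to it by definition; unions within a single exceptional piece can be enumerated in a few lines (for instance $\{0,2,4\} \cup \{0,2,6\} = \{0,2,4,6\} \in \Ee$); and the mixed unions are handled by noting that they contain the full base set of the upper piece and have the required size, hence land there. For example, any union of a set from $\Ee$ with a set from $\Oo$ contains both $0$ and $1$ and has size at least $3$, so it lies in $\Aa$. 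For $\overline{\Pp^9_4}$ the property is inherited from $\Pp^8_3$, since adjoining a common element to every set preserves it.

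The counts of sets and of per-element occurrences are routine, and symmetry among indices cuts down the case work. One finds $|\Pp^8_3| = (2^6 - 1) + 4 + 4 = 71$, $|\overline{\Pp^9_4}| = 71$, $|\Qq^9_5| = 8 + 11 + 3 = 22$, and $|\Rr^{10}_6| = 31 + 5 \cdot 4 = 51$. In $\Pp^8_3$, the elements $0$ and $1$ each occur in all $63$ sets of $\Aa$ together with all $4$ sets of $\Ee$ or $\Oo$ respectively, giving $67$, while each other element occurs in $32$ sets of $\Aa$ and $3$ of $\Ee$ (or $\Oo$), giving $35$; for $\overline{\Pp^9_4}$ the element $8$ is in every set and the remaining counts transfer from $\Pp^8_3$; for $\Qq^9_5$ the elements $0, 1, 2, 3$ occur in more than $11$ sets while the remaining five elements lie in exactly $11$ (which is why this family is excluded from the strict-majority addendum); and for $\Rr^{10}_6$, reading off the $G_i$ from Figure~\ref{setsG} gives abundance for $0, \dots, 4$ and strictly fewer than half the sets for $5, \dots, 9$.

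Twin-freeness is verified by producing, for each family, a short catalogue of separating sets: for every unordered pair $\{a, b\}$, some listed set contains exactly one of $a$ and $b$. The exceptional pieces already separate most pairs thanks to their asymmetry in the distinguished indices, and the remaining pairs are separated by smallest-size sets of the upper piece. The main obstacle is the bookkeeping: each family has around ten elements and up to seventy-one sets, so the danger is miscounting occurrences or overlooking a pair in the twin-freeness check. I would organize each verification as a table of per-element occurrence counts together with a list of separating sets, keeping the arithmetic transparent and the proof essentially mechanical.
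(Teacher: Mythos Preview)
Your proposal is correct and follows essentially the same approach as the paper: decompose each family into an upward-closed part over a fixed base together with a short list of exceptional pieces, verify union-closedness by the three-way case split you describe, and read off the sizes and per-element occurrence counts (your numbers $71$, $71$, $22$, $51$ and the individual counts $67$, $35$, $21$, $18$, $11$, $47$, $25$ all match the paper's). The only addition is that you spell out a plan for twin-freeness via separating sets, whereas the paper simply asserts it; this is harmless extra care.
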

\begin{proof}
For each of the families $\Pp^8_3$, $\overline{\Pp^9_4}$, $\Qq^9_5$ and $\Rr^{10}_6$,
we note that the subscript and superscript
correspond to the sizes of its smallest and largest set, respectively.
We observe that each of the families is twin-free.
It remains to count the occurrences of elements and show that the families are union-closed.

We show statement (1).
We note that $|\Pp^8_3| = |\Aa| + |\Ee| + |\Oo| = 63 + 4 + 4 = 71$.
Also, we observe that each of elements $0, 1$ belongs to $67$ sets of $\Pp^8_3$
(it belongs to all sets of $\Aa$ and $4$ sets of $\Ee \cup \Oo$), and
each of elements $2,3,\dots,7$ belongs to $35$ sets of $\Pp^8_3$
(it belongs to $32$ sets of $\Aa$ and $3$ sets of $\Ee \cup \Oo$).
Thus, elements $0$ and $1$ are abundant for $\Pp^8_3$
and each of elements $2,3,\dots,7$ belongs to strictly less than half of the sets of $\Pp^8_3$.

In order to show that $\Pp^8_3$ is union-closed,
we consider arbitrary sets $X$ and $Y$ of $\Pp^8_3$
and show that the set $X \cup Y$ belongs to $\Pp^8_3$.
We discuss three cases.
For the case that $1$ does not belong to $X \cup Y$,
we note that $X$ and $Y$ belong to $\Ee$.
The definition of $\Ee$ yields that the set $X \cup Y$ belongs to $\Ee$,
and hence $X \cup Y$ belongs to $\Pp^8_3$.
For the case that $0$ does not belong to $X \cup Y$,
the conclusion follows similarly from the definition of $\Oo$.
Thus, we can assume that $0$ and $1$ belong to $X \cup Y$.
Since $|X| \geq 3$ and $|Y| \geq 3$, we have $|X \cup Y| \geq 3$.
By the definition of $\Aa$, the set $X \cup Y$ belongs to $\Aa$,
and thus to $\Pp^8_3$.

We proceed with statement (2).
We use the properties of $\Pp^8_3$ 
and note that $\overline{\Pp^9_4}$ is also union-closed,  
and each of elements $0, 1$ belongs to $67$ sets of $\overline{\Pp^9_4}$,
and each of elements $2,3,\dots,7$ belongs to $35$ sets of $\overline{\Pp^9_4}$,
and element $8$ belongs to all $71$ sets of $\overline{\Pp^9_4}$.
Thus, elements $0, 1$ and $8$ are abundant for $\overline{\Pp^9_4}$
and each of elements $2,3,\dots,7$ belongs to strictly less than half of the sets of $\overline{\Pp^9_4}$.

Next, we show statement (3).
We note that $|\Qq^9_5| = |\Bb| + |\Cc| + |\Dd| = 8 + 11 + 3 = 22$.
We observe that each of elements $0,1,2$ belongs to $21$ sets, 
and element $3$ belongs to $18$ sets,
and each of elements $4,5,\dots,8$ belongs to $11$ sets
(since no set from $\Cc$ contains $4$ or $5$,
and each of elements $6,7$ and $8$ belongs to
$4$ sets from $\Bb$ and $7$ sets from $\Cc$ and no set from $\Dd$).

We consider arbitrary sets $X$ and $Y$ of $\Qq^9_5$
and discuss two cases.
For the case that both $X$ and $Y$ belong to $\Cc$,
we note that $X \cup Y$ belongs to $\Cc$ (this follows from the definition of $\Cc$),
and thus $X \cup Y$ belongs to $\Qq^9_5$.
For the case that at least one of $X$ and $Y$ belongs to $\Bb \cup \Dd$,
we observe that $\{0,1,\dots,5\}$ is a subset of $X \cup Y$ if $X \neq Y$.
It follows that $X \cup Y$ belongs to $\Bb$,
and thus to $\Qq^9_5$.

Lastly, we show statement (4).
We note that $|\Rr^{10}_6| = |\Ff| + |\Gg_0| + \dots + |\Gg_4| = 31 + 5 \cdot 4 = 51$, and
each of elements $0,1,\dots,4$ belongs to $47$ sets
($31$ sets of $\Ff$ and $16$ sets of $\Gg_0 \cup \dots \cup \Gg_4$), and
each of elements $5,6,\dots,9$ belongs to $25$ sets
($16$ sets of $\Ff$ and $9$ sets of $\Gg_0 \cup \dots \cup \Gg_4$).

We consider arbitrary sets $X$ and $Y$ of $\Rr^{10}_6$.
We use that each set of $\Rr^{10}_6$ contains at least four elements of $\{0,1,\dots,4\}$
and discuss two cases.
For the case that the set $X \cup Y$ contains $\{0,1,\dots,4\}$ as a subset,
we note that $X \cup Y$ belongs to $\Ff$, and thus to $\Rr^{10}_6$.
Otherwise,
we note that $X \cup Y$ contains all but one element of $\{0,1,\dots,4\}$,
and we let $i$ be this element.
It follows that $X$ and $Y$ belong to $\Gg_i$,
and hence $X \cup Y$ belongs to $\Gg_i$,
and thus to $\Rr^{10}_6$.
\end{proof}

With Proposition~\ref{constructions} on hand,
we consider families $\overline{\Pp^9_4}$ and $\Qq^9_5$
and simple extensions of families $\Pp^8_3$ and $\Rr^{10}_6$
and prove Theorem~\ref{counterexamples}.

\begin{proof}[Proof of Theorem~\ref{counterexamples}]
For statement (1), we consider families $\Pp^8_3$, $\overline{\Pp^9_4}$, $\Qq^9_5$ and $\Rr^{10}_6$ and
note that they have the desired properties by Proposition~\ref{constructions}.

In order to show statement (2),
we consider a simple extension $\Pp^n_k$ of the construction of $\Pp^8_3$.
Given integers $n$ and $k$ such that $n \geq k \geq 3$,
we let $\Aa$ be the family consisting of all sets $A$ such that
$A \subseteq \{0,1,\dots,n-1\}$ and
$\{0,1\} \subset A$ and
$|A| \geq k$,
and let $\Ee$ be the family consisting of all sets $E$
such that $E \subseteq \{0,2,\dots, 2\lfloor \frac{n}{2} \rfloor -2\}$
and $0 \in E$ and $|E| \geq k$,
and let $\Oo$ be the family consisting of all sets $O$
such that $O \subseteq \{1,3,\dots, 2\lfloor \frac{n}{2} \rfloor -1\}$ and
$1 \in O$ and $|O| \geq k$,
and we let $\Pp^n_k = \Aa \cup \Ee \cup \Oo$.
In~particular,
$|\Ee| = |\Oo|$ for every $n$ and $k$,
and if $n$ is odd, then element $n-1$ belongs to no set from $\Ee \cup \Oo$.

We observe that $\Pp^n_k$ is twin-free,
and the fact that $\Pp^n_k$ is union-closed
follows by a very similar argument as for the family $\Pp^8_3$.
We also note that elements $0$ and $1$ are abundant for $\Pp^n_k$
since $|\Aa| + |\Ee| > |\Oo|$ and $|\Aa| + |\Oo| > |\Ee|$.

Finally, we use the assumption that $k$ and $n$ satisfy the inequality 
$${\sum_{i=k-1}^{\lfloor \frac{n}{2} \rfloor - 1} \binom{\lfloor \frac{n}{2} \rfloor - 1}{i}
>
\binom{n-3}{k-3} + \binom{\lfloor \frac{n}{2} \rfloor -2}{k-2}},$$
and we show that each of elements $2,3,\dots,n-1$ belongs to strictly less than half of the sets of $\Pp^n_k$.
In particular, we note that each of elements $2,3,\dots,n-2$ belongs to the same number of sets of $\Pp^n_k$,
and element $n-1$ does not belong to a greater number of sets.
Hence, it suffices to consider just element $2$
and show that it belongs to strictly less than half of the sets of $\Pp^n_k$.
We let $\Aa_2$ be the family of all sets of $\Aa$ which contain element $2$
and let $\Aa_0 = \Aa \sm \Aa_2$. Instead of enumerating $|\Aa_2|$ and $|\Aa_0|$,
we enumerate the quantity $a = |\Aa_2| - |\Aa_0|$.
To this end, we consider an arbitrary set $A$ of $\Aa_0$ 
and observe that the set $A \cup \{2\}$ always belongs to $\Aa$, 
and similarly we consider a set $A$ of $\Aa_2$
and observe that the set $A \sm \{2\}$ belongs to $\Aa$ if and only if $|A| > k$.
These two observations imply that $a$ is equal to the number of sets of size $k$ which belong to $\Aa_2$,
that is, $a = \binom{n-3}{k-3}$.
We use a similar reasoning for the sets of $\Ee$.
We let $e$ be equal to the number of sets of $\Ee$ containing element $2$ minus
the number of the remaining sets of $\Ee$,
and we observe that $e = \binom{\lfloor \frac{n}{2} \rfloor -2}{k-2}$.
Lastly, we enumerate the number of sets in $\Oo$.
We use that a smallest set in $\Oo$ is of size $k$
and the largest set is of size $\lfloor \frac{n}{2} \rfloor$ 
and each set contains element $0$, and hence we note that
$$|\Oo| = \sum_{i=k-1}^{\lfloor \frac{n}{2} \rfloor - 1} \binom{\lfloor \frac{n}{2} \rfloor - 1}{i}.$$
We conclude that 
$|\Oo| > a + e$,
and thus element $2$ belongs to strictly less than half of the sets of $\Pp^n_k$.

Finally, we show that   
for every fixed $k$, we can choose $n$ large enough so that 
$${\sum_{i=k-1}^{\lfloor \frac{n}{2} \rfloor - 1} \binom{\lfloor \frac{n}{2} \rfloor - 1}{i}
>
\binom{n-3}{k-3} + \binom{\lfloor \frac{n}{2} \rfloor -2}{k-2}}.$$
In particular if $k-1 \leq \frac{1}{2} \left( \lfloor \frac{n}{2} \rfloor - 1 \right)$,
then it is easy to see that the left-hand side can be bounded by an exponential function as
\begin{equation*}
\sum_{i=k-1}^{\lfloor \frac{n}{2} \rfloor - 1} \binom{\lfloor \frac{n}{2} \rfloor - 1}{i}
\geq
\frac{1}{2}\sum_{i=0}^{\lfloor \frac{n}{2} \rfloor - 1} \binom{\lfloor \frac{n}{2} \rfloor - 1}{i}
=
\frac{1}{2} \cdot 2^{\lfloor \frac{n}{2} \rfloor - 1}
=
2^{\lfloor \frac{n}{2} \rfloor - 2}.
\end{equation*}
For the right-hand side, we note that the first term is bounded by a polynomial as 
\begin{equation*}
\binom{n-3}{k-3}
=
\frac{n-3}{k-3} \cdot \frac{n-4}{k-4} \cdot \dots \cdot \frac{n-k-1}{1}
<
(n-3)^{k-3},
\end{equation*}
and similarly the second term is bounded as
\begin{equation*}
\binom{\lfloor \frac{n}{2} \rfloor -2}{k-2}
<
\left(\lfloor \frac{n}{2} \rfloor -2\right)^{k-2}.
\end{equation*}
For every $k$ and large enough $n$,
we clearly have that 
\begin{equation*} 
2^{\lfloor \frac{n}{2} \rfloor - 2}
>
(n-3)^{k-3} + \left(\lfloor \frac{n}{2} \rfloor -2 \right)^{k-2},
\end{equation*}
and the desired inequality follows.

Next, we show statement (3).
We consider integers $k$ and $n$ which satisfy that
${n \geq \max \{ 3, k + 8\left \lceil \frac{k}{2} \right \rceil - 8 \}}$,
and we note that the term
$k + 8\left \lceil \frac{k}{2} \right \rceil - 8$ is equal to $5k - 8$ for even $k$,
and it is equal to $5k - 4$ for odd $k$ as desired.

For $0 \leq k \leq 3$,
we consider the following simple families and observe that each family is a $(2,k,n)$-construction. 
\begin{itemize}
\item
We consider
$\{ \emptyset, \{0\}, \{1\}, \{0,1\}, \{0,1,\dots ,n-1\} \}$
for $k = 0$.
\item
Similarly,
$\{ \{0\}, \{1\}, \{0,1\}, \{0,1,\dots ,n-1\} \}$
for $k = 1$.
\item
We consider
$\{ \{0,1\}, \{0,1,\dots ,n-1\} \}$
for $k = 2$.
\item
For $k = 3$,
we consider the family obtained from $\Pp^8_3$ 
by adding $n - 8$ new elements, say $8, 9, \dots, n - 1$,
to each set which contains element $2$.
\end{itemize}
Hence,
we can assume that $k \geq 4$ and $n \geq k + 8\left \lceil \frac{k}{2} \right \rceil - 8$,
and we produce a $(2,k,n)$-construction by extending the family $\Pp^{12}_4$.
To this end, we consider sets of additional elements $A_2, A_3,\dots, A_{11}$ such that
$A_2, A_3,\dots, A_{11}$ are pairwise disjoint and each is disjoint with $\{0,1,\dots,11\}$,
and $|A_i| = \left \lceil \frac{k-4}{2} \right \rceil$ for every $i$ of $\{2,3,\dots,9\}$, and
$|A_{10}| = \left \lfloor \frac{k-4}{2} \right \rfloor$ and
$|A_{11}| = n - 12 - \sum_{i = 2}^{10}|A_i|$.
We extend $\Pp^{12}_4$ as follows.
For every $i$ of $\{2,3,\dots,11\}$ in sequence,
we add all elements of $A_i$ to every set containing $i$.
We let $\Pp^+$ denote the resulting family.

Clearly, the largest set of $\Pp^+$ is
$\{0,1,\dots,11\} \cup A_2 \cup A_3 \cup \dots \cup A_{11}$
and its size is
$12 + |A_2| + |A_3| + \dots + |A_{11}|$
which is equal to $n$.
In order to determine the size of a smallest set of $\Pp^+$, we first note the following.
\begin{align*}
|A_{11}|
&= n - 12 - \sum_{i = 2}^{10}|A_i|
= n - 12 - 8 \left \lceil \frac{k-4}{2} \right \rceil - \left \lfloor \frac{k-4}{2} \right \rfloor \\
&= n - k - 7 \left \lceil \frac{k}{2} \right \rceil + 6
\geq k + 8 \left \lceil \frac{k}{2} \right \rceil - 8 - k - 7 \left \lceil \frac{k}{2} \right \rceil + 6 \\
&\geq \left \lceil \frac{k-4}{2} \right \rceil
\end{align*}
Hence, $|A_{11}| \geq |A_i|$ for every $i$ of $\{2,3,\dots,10\}$,
and it follows that a smallest set of~$\Pp^+$ is, for instance, the set $\{0,1,2,10\} \cup A_2 \cup A_{10}$
and its size is $4 + \lceil \frac{k-4}{2} \rceil + \lfloor \frac{k-4}{2} \rfloor$ 
which is equal to $k$.

We use the properties of $\Pp^{12}_4$ and observe that $\Pp^+$ is union-closed.
We conclude that $\Pp^+$ is a $(2,k,n)$-construction,
and $0$ and $1$ are the only abundant elements for $\Pp^+$,
and each non-abundant element belongs to strictly less than half of the sets of~$\Pp^+$.

Lastly, we show statement (4).
We let $k$ and $n$ be arbitrary integers such that
$n - 4 \geq k \geq 3$ and $n \geq 9$
and we produce a $(k - 1,k,n)$-construction.
We discuss three cases.

For the case that $n = 9$ and $k = 5$,
we just consider family $\Qq^9_5$ and use item (3) of Proposition~\ref{constructions}.

For the case that $k = n - 4$ and $n \geq 10$,
we consider family $\Rr^{10}_6$ and its properties given item (4) of Proposition~\ref{constructions},
and we extend the family as follows.
We take $n - 10$ new elements, say $10, 11, \dots, n - 1$, and augment 
every set of $\Rr^{10}_6$ by adding all these elements,
and we let $\overline{\Rr^{n}_k}$ denote the resulting family.
We note that $\overline{\Rr^{n}_k}$ is union-closed 
and its smallest set is of size $k$ and largest set of size $n$
and precisely $k - 1$ elements are abundant for $\overline{\Rr^{n}_k}$ as desired.  
Furthermore,
each of elements $5,6,\dots,9$ belongs to strictly less than half of the sets of $\overline{\Rr^{n}_k}$.

For the case that $n - 5 \geq k \geq 3$ and $n \geq 9$,
we use item (1) of Proposition~\ref{constructions} and we extend family $\Pp^8_3$ as follows.
We choose a non-abundant element, say $2$,
and we augment every set of $\Pp^8_3$ containing $2$ 
by adding elements $8, 9, \dots, n - 1$ (which is $n - 8$ new elements),
and augment every other set by adding elements $8, 9, \dots, k + 4$ (which is $k - 3$ elements),
and we let $\Pp^+$ denote the resulting family.
We observe that $\Pp^+$ is union-closed and
the abundant elements for $\Pp^+$ are $0, 1$ and $8, 9, \dots, k + 4$.
We conclude that $\Pp^+$ is a $(k - 1,k,n)$-construction
and each non-abundant element for $\Pp^+$ belongs to strictly less than half of the sets of $\Pp^+$.
\end{proof}

In relation to statement (3) of Theorem~\ref{counterexamples},
we remark that a simpler $(2,k,n)$-construction can be obtained by extending the family $\Pp^8_3$
(instead of $\Pp^{12}_4$) 
and considering a slightly worse bound of $n \geq \max \{ 3, 6k - 10 \}$.

\section{Relations among conjectures}
\label{relations}

In the present section, we show Proposition~\ref{hierarchy}.
For the sake of completeness,
we also recall the common formulations of the conjectures 
and show that they are equivalent to the formulations stated in Section~\ref{introduction}.
Following~\cite{BS, P, CH},
the formulations are recalled in Conjectures~\ref{franklA}, \ref{poonen3B}, \ref{poonen4C} and~\ref{CuiHuS2D}.

\begin{originalConjecture}
\label{franklA}
If $\Ff$ is a finite union-closed family of sets such that $\Ff \neq \{\emptyset\}$,
then some element belongs to at least half of the sets of $\Ff$.
\end{originalConjecture}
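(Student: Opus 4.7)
The statement is the classical union-closed conjecture of Frankl in its original formulation (allowing $\emptyset \in \Ff$ and using the non-strict inequality ``at least half''), which is widely open; so the honest plan is to describe the approach I would attempt and to flag the point where every known argument has stopped short of the full bound.

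My first move would be to reduce to a clean probabilistic setup: let $M$ denote the ground set of $\Ff$, pick $A$ and $B$ independently and uniformly at random from $\Ff$, and observe that $A \cup B \in \Ff$ by union-closedness, hence $H(A \cup B) \leq H(A) = \log |\Ff|$. Writing $p_x = \Pr[x \in A]$ for each $x \in M$ and assuming for contradiction that $p_x < \tfrac{1}{2}$ for every $x$, I would attempt Gilmer's coordinatewise entropy estimate: for $p$ bounded away from $\tfrac{1}{2}$ one has $h(2p - p^2) \geq (1 + \varepsilon) h(p)$ for some $\varepsilon = \varepsilon(p) > 0$, and summing (with suitable conditioning) one hopes to conclude $H(A \cup B) > H(A)$, contradicting the first inequality. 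In its sharpest known form this yields a constant around $\tfrac{3 - \sqrt{5}}{2}$ in place of $\tfrac{1}{2}$.

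To close the remaining gap to $\tfrac{1}{2}$, I would try to strengthen the pointwise entropy comparison by exploiting correlations among the coordinates $\mathbf{1}_{x \in A}$ (they are not independent, since $A$ is a single random member of $\Ff$), perhaps by iterating with $A_1 \cup \dots \cup A_k \in \Ff$ for larger $k$ or by passing to conditional entropies aligned with a chain of sets in $\Ff$. As a complementary combinatorial route I would try induction on $|\Ff|$: choose a carefully selected element $x$, split $\Ff$ into $\Ff_x = \{A \in \Ff : x \in A\}$ and $\Ff \setminus \Ff_x$, and transfer abundance from the union-closed subfamily $\Ff_x$ (or from the trace on $M \setminus \{x\}$) back to $\Ff$, in the spirit of Reimer's averaging-over-chains arguments and Poonen's lattice reformulation.

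The hard part, and in fact the reason this remains a conjecture after forty years, is precisely what makes both strategies stall: every known entropy inequality loses too much slack as $p_x \to \tfrac{1}{2}$, and no inductive or weighting scheme is known to preserve the tight half-threshold across arbitrary union-closed structure. I would therefore realistically expect to obtain only partial results of the flavor already achieved in this paper, for instance an improved absolute constant or the bound under a lower bound on the size of a smallest set as in Theorem~\ref{abundance}, rather than a full proof of the statement as worded.
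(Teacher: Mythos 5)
You have correctly identified that the statement is not a theorem of this paper at all: it is Frankl's union-closed sets conjecture in its common formulation (empty set allowed, non-strict ``at least half''), stated here as Conjecture~\ref{franklA} precisely as a conjecture, and the paper offers no proof of it. So there is no proof in the paper to compare your attempt against, and your decision to describe the entropy-based state of the art (Gilmer's argument via $H(A\cup B)\leq H(A)$ for $A,B$ uniform in $\Ff$, sharpened by later work to the constant $\frac{3-\sqrt{5}}{2}$) and to flag honestly that no known method reaches the threshold $\frac12$ is the appropriate response; your sketch of that route is accurate in substance. The only thing the paper actually establishes about this statement is statement (1) of Proposition~\ref{equivalence} (in the refined form of statement (4) of Proposition~\ref{relationsAndN}): Conjecture~\ref{franklA} is equivalent to Conjecture~\ref{frankl}, the strict-majority formulation for families with $\emptyset\notin\Ff$. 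That equivalence is elementary: given $\Ff$ with $\Ff\neq\{\emptyset\}$, apply Conjecture~\ref{frankl} to $\Ff'=\Ff\sm\emptyset$ to get an element $x$ in $c>\frac12|\Ff'|$ sets, and integrality gives $2c\geq|\Ff'|+1\geq|\Ff|$, so $x$ lies in at least half of $\Ff$; conversely, add $\emptyset$ to a family avoiding it and note that ``at least half'' of $\Ff\cup\{\emptyset\}$ forces ``more than half'' of $\Ff$. If anything were expected of a proof attempt here, it would be that reduction rather than the conjecture itself; your proposal does not address it, but it also makes no false claim of having resolved the statement, which is the essential point.
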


\begin{originalConjecture}
\label{poonen3B}
Let $\Ff$ be a finite union-closed family of sets.
If precisely one element belongs to at least half of the sets of $\Ff$,
then this element belongs to each non-empty set of $\Ff$.
\end{originalConjecture}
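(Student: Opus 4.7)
The plan is to deduce Conjecture~\ref{poonen3B} from Conjecture~\ref{poonen3} by reducing a family satisfying the hypotheses of Conjecture~\ref{poonen3B} to one satisfying the hypotheses of Conjecture~\ref{poonen3}. Given such a family $\Ff$ with $x^*$ denoting the unique element belonging to at least half of the sets of $\Ff$, I would pass to $\Ff' = \Ff \sm \{\emptyset\}$. Observe that $\Ff'$ is union-closed (a union of non-empty sets is non-empty), $\emptyset \notin \Ff'$, every element has the same count in $\Ff$ and in $\Ff'$ (since $\emptyset$ contains no element), and $|\Ff'|$ equals $|\Ff|$ or $|\Ff| - 1$ depending on whether $\emptyset \in \Ff$.

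The main step is to verify that $\Ff'$ satisfies the hypothesis of Conjecture~\ref{poonen3}, namely that $x^*$ is the unique element belonging to \emph{strictly} more than half of the sets of $\Ff'$. Once this is done, Conjecture~\ref{poonen3} applied to $\Ff'$ yields $x^* \in S$ for every $S \in \Ff'$, which is precisely the conclusion of Conjecture~\ref{poonen3B}. When $\emptyset \in \Ff$, the verification is a short calculation: $x^*$'s count is at least $|\Ff|/2 = (|\Ff'| + 1)/2 > |\Ff'|/2$, while each other element $y$ has count strictly less than $|\Ff|/2 = (|\Ff'| + 1)/2$, which forces $y$'s count to be at most $|\Ff'|/2$ since counts are integers. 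When $\emptyset \notin \Ff$ so $\Ff' = \Ff$, the analogous verification goes through provided $x^*$'s count is strictly greater than $|\Ff|/2$.

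I expect the main obstacle to be the remaining edge case in which $\emptyset \notin \Ff$ and $x^*$'s count equals $|\Ff|/2$ exactly. In this configuration no element of $\Ff$ belongs to strictly more than half of its sets, which contradicts Conjecture~\ref{frankl}. Since Proposition~\ref{hierarchy} provides the chain Conjecture~\ref{poonen3} $\Rightarrow$ Conjecture~\ref{CuiHuS2} $\Rightarrow$ Conjecture~\ref{frankl}, this edge case cannot occur under the standing assumption of Conjecture~\ref{poonen3}, and the reduction above completes the proof.
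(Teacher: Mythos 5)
Your reduction is correct and matches the paper's own argument for this direction (the forward implication in statement (5) of Proposition~\ref{relationsAndN}): pass to $\Ff' = \Ff \sm \emptyset$, use the integrality count to rule out abundance of any element other than $x^*$, and invoke the chain Conjecture~\ref{poonen3} $\Rightarrow$ Conjecture~\ref{CuiHuS2} $\Rightarrow$ Conjecture~\ref{frankl} to dispose of the case where $x^*$ would fail to be abundant for $\Ff'$, then apply Conjecture~\ref{poonen3} to $\Ff'$. The only cosmetic difference is that the paper lets the Frankl-type conclusion supply the abundant element directly (which must then be $x^*$), whereas you isolate the exact-half edge case explicitly; the logic is the same.
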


\begin{originalConjecture}
\label{poonen4C}
Let $\Ff$ be a finite union-closed family of sets and
let $M$ be the largest set of $\Ff$.
For every pair of elements $a, b$ of $M$, let $\Ff$
contain a set $A$ such that $|A \cap \{a,b\}| = 1$.
Let $x$ be the only element which belongs to at least half of the sets of $\Ff$.
If $|M| \geq 2$, then $\Ff$ consists of the empty set and precisely all sets of $2^M$ containing $x$.
If $|M| = 1$ then $\Ff = \{\{x\}\}$ or $\Ff = \{\emptyset, \{x\}\}$.
\end{originalConjecture}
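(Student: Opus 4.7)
My plan is to show Conjecture~\ref{poonen4C} is equivalent to Conjecture~\ref{poonen4}, by passing between the two formulations via adjoining or removing the empty set and carefully converting between the strict threshold (more than half) of Conjecture~\ref{poonen4} and the non-strict threshold (at least half) of Conjecture~\ref{poonen4C}.

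As a preliminary, I would verify that a union-closed family~$\Ff$ with largest set~$M$ is twin-free if and only if for every pair of elements $a, b$ of~$M$ there is a set $A$ of~$\Ff$ with $|A \cap \{a,b\}| = 1$. This is immediate from the definitions once one observes that every element appearing in~$\Ff$ lies in~$M$, since $M$ is the union of all sets of~$\Ff$, so twins can only occur among elements of~$M$.

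For the implication Conjecture~\ref{poonen4}~$\Rightarrow$~Conjecture~\ref{poonen4C}, I would take $\Ff$ satisfying the hypotheses of Conjecture~\ref{poonen4C} (with $|M| \geq 2$) and set $\Ff' = \Ff \sm \{\emptyset\}$; this family is union-closed, twin-free, and contains no empty set. The key arithmetic is that when $\emptyset \in \Ff$ we have $|\Ff'| = |\Ff| - 1$ while element counts are preserved, so an element is in more than half of~$\Ff'$ exactly when it is in at least half of~$\Ff$. Thus $x$ is the unique abundant element of~$\Ff'$, Conjecture~\ref{poonen4} gives $\Ff' = \{A \subseteq M : x \in A\}$, and adjoining $\emptyset$ yields $\Ff = \{\emptyset\} \cup \{A \subseteq M : x \in A\}$. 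The case $\emptyset \not\in \Ff$ must be ruled out: if it occurred with $x$ strictly more than half in~$\Ff$, then Conjecture~\ref{poonen4} would force $\Ff = \{A \subseteq M : x \in A\}$, but then every element of $M \sm \{x\}$ would belong to exactly $|\Ff|/2$ sets of~$\Ff$, violating the uniqueness of~$x$ under the non-strict threshold once $|M| \geq 2$. For the converse direction, I would take $\Ff$ satisfying the hypotheses of Conjecture~\ref{poonen4} and set $\Ff'' = \Ff \cup \{\emptyset\}$; a short parity check shows $x$ remains uniquely in at least half of~$\Ff''$, and Conjecture~\ref{poonen4C} then gives $\Ff = \{A \subseteq M : x \in A\}$, while the singleton case $|M| = 1$ is handled by inspection ($\Ff \subseteq \{\{x\}\}$ together with $\Ff \neq \emptyset$ forces $\Ff = \{\{x\}\}$).

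The main obstacle I expect is the edge case in the forward direction where $\emptyset \not\in \Ff$ and $x$ sits at exactly $|\Ff|/2$, since there Conjecture~\ref{poonen4} does not apply directly; this configuration must be excluded by leveraging the twin-free hypothesis together with the uniqueness of~$x$ under the non-strict threshold. The remainder of the argument amounts to routine parity bookkeeping in~$|\Ff|$.
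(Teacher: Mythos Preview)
Your overall strategy matches the paper's, and the reverse implication, the sub-case $\emptyset \in \Ff$ of the forward implication, and your final contradiction (if $\Ff = \{A \subseteq M : x \in A\}$ with $|M| \geq 2$ then every $y \neq x$ lies in exactly half the sets, violating uniqueness) are all correct and appear in the paper as well.

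The gap is precisely the edge case you flag: $\emptyset \notin \Ff$ with $x$ in exactly $\tfrac{1}{2}|\Ff|$ sets and every other element in strictly fewer. Your proposed fix --- twin-free plus uniqueness plus parity bookkeeping --- does not work on its own: such an $\Ff$ is nothing other than a twin-free counterexample to Conjecture~\ref{frankl}, and since Conjecture~\ref{poonen4} is a conditional with hypothesis ``precisely one element is abundant'', it is vacuously satisfied by any family with \emph{no} abundant element and hence cannot by itself exclude this configuration. The paper closes the gap by invoking the already-established chain of Proposition~\ref{hierarchy} (items (1)--(3) of Proposition~\ref{relationsAndN}): if Conjecture~\ref{poonen4} works for $n$ then so does Conjecture~\ref{frankl}, so $\Ff' = \Ff \sm \{\emptyset\}$ must have an abundant element. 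Combined with the uniform estimate $2c_a \leq |\Ff| - 1 \leq |\Ff'|$ for every $a \neq x$ (valid regardless of whether $\emptyset \in \Ff$), this forces $x$ to be the unique abundant element of $\Ff'$, so Conjecture~\ref{poonen4} applies directly --- and your case split on membership of $\emptyset$ becomes unnecessary.
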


\begin{originalConjecture}
\label{CuiHuS2D}
If $\Ff$ is a finite union-closed family of sets such that $\Ff \neq \{\emptyset\}$
and a smallest non-empty set of $\Ff$ is of size at least $2$,
then there are at least two elements such that each belongs to a least half of the sets of $\Ff$.
\end{originalConjecture}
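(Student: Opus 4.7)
The plan is to attack the statement by strong induction on $|\Ff|$, combined with a case split based on whether $\Ff$ contains a set of size exactly $2$. By the equivalence discussed in Section~\ref{introduction}, I would work with the equivalent formulation: $\Ff$ is union-closed, $\emptyset \notin \Ff$, every set has size at least $2$, and the goal is to exhibit at least two \emph{abundant} elements.

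First I would reduce to the twin-free case. If $a, b$ are twins, then every set of $\Ff$ contains both or neither; writing $\Ff = \Ff_{ab} \sqcup \Ff_{\overline{ab}}$ accordingly, the injection $X \mapsto X \cup \{a,b\}$ from $\Ff_{\overline{ab}}$ into $\Ff_{ab}$ yields $|\Ff_{ab}| \geq |\Ff_{\overline{ab}}|$, and if the inequality is strict both $a$ and $b$ are abundant and we are done; the equality case is very rigid (every set of $\Ff_{ab}$ arises uniquely as $\{a,b\} \cup Y$ for $Y \in \Ff_{\overline{ab}}$) and should be further reducible. With twin-freeness in hand, if $\Ff$ contains a set $\{a,b\}$ of size $2$, one invokes the Sarvate--Renaud theorem to get an abundant $x \in \{a,b\}$, and uses the injection $X \mapsto X \cup \{y\}$ (where $y$ is the other of $a,b$) to give $y$ the \emph{weak} inequality $|\Ff_y| \geq |\Ff|/2$; twin-freeness would then be leveraged to promote this to strict abundance.

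If the smallest set has size at least $3$, I would appeal to statement~(3) of Theorem~\ref{abundance}: the bound $f \geq \min\{n, 2k - n + 1\}$ already delivers two abundant elements whenever $n \leq 2k - 1$, so only the regime $n \geq 2k$ remains. Here the plan is a structural reduction: identify an element forced to lie in many sets by an averaging argument restricted to the sub-family of minimum-size sets, contract or delete it, and invoke the induction hypothesis on the resulting smaller union-closed family whose set-size profile still fits the inductive frame.

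The main obstacle, and the reason this case is left open in the paper, is the strict-versus-weak gap in the size-$2$ subcase. Proposition~\ref{hierarchy} shows that Conjecture~\ref{CuiHuS2} sits strictly above Conjecture~\ref{frankl} and strictly below Conjecture~\ref{poonen3}, so the required upgrade from weak to strict abundance for the second element is essentially a Poonen-type rigidity statement inside the $y$-containing subfamily. The double-counting machinery used for statement~(1) of Theorem~\ref{abundance} crucially exploits $k \geq n - 3$ to keep the dual family's sets small, and it does not visibly extend to this regime; any argument that closes the gap would almost certainly imply new progress on Frankl's conjecture itself, so I expect that a fully general proof will require a genuinely new structural idea rather than a refinement of the present techniques.
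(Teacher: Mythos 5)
There is a fundamental mismatch here: the statement you were asked to prove is Conjecture~\ref{CuiHuS2D}, which in this paper is an \emph{open conjecture} (Cui and Hu's conjecture for $k=2$, in its original formulation). The paper never proves it; it only proves that this formulation is equivalent to Conjecture~\ref{CuiHuS2} (statement (7) of Proposition~\ref{relationsAndN}) and that the conjecture sits between Conjecture~\ref{frankl} and Conjecture~\ref{poonen3} (Proposition~\ref{hierarchy}). Your text is likewise not a proof but a programme, and you concede this yourself in the last paragraph: the decisive steps --- ``the equality case \dots should be further reducible,'' ``twin-freeness would then be leveraged to promote this to strict abundance,'' ``contract or delete it, and invoke the induction hypothesis'' --- are exactly the places where the open problem lives, and no argument is supplied for any of them. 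So the honest verdict is that neither you nor the paper proves the statement, and your sketch should not be presented as progress beyond locating the difficulty.

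Beyond that, one concrete step in your sketch is incorrect as written. If $a,b$ are twins, the map $X \mapsto X \cup \{a,b\}$ from $\Ff_{\overline{ab}}$ to $\Ff_{ab}$ need not land in $\Ff$ at all: union-closedness only guarantees $X \cup A \in \Ff$ for $A \in \Ff$, and nothing forces a set whose intersection with the rest of $X \cup \{a,b\}$ is controlled --- in particular $\{a,b\}$ itself need not belong to $\Ff$. (The same objection applies to the injection $X \mapsto X \cup \{y\}$ in your size-$2$ subcase; what union-closedness gives you there is $X \cup \{a,b\} \in \Ff$, which is the classical Sarvate--Renaud count and only yields that \emph{one} of $a,b$ is in at least half of the sets.) The paper's twin reduction (in the proof of statement (1) of Proposition~\ref{relationsAndN}) works in the opposite direction --- deleting one twin from every set, which preserves union-closedness and all counts --- precisely because adding elements to sets is not available. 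Also note that your appeal to statement (3) of Theorem~\ref{abundance} covers only $n \leq 2k-1$, and the paper's own $(2,k,n)$-constructions (statement (3) of Theorem~\ref{counterexamples}) show that in the complementary regime the number of abundant elements really can collapse to two, so any averaging-plus-contraction induction must survive exactly those examples; nothing in your outline addresses this.
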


We show that the formulations of the conjectures are equivalent as follows.

\begin{proposition}
\label{equivalence}
Each of the following equivalences holds.
\begin{enumerate}
\item
Conjectures~\ref{frankl} and~\ref{franklA} are equivalent.
\item
Conjectures~\ref{poonen3} and~\ref{poonen3B} are equivalent.
\item
Conjectures~\ref{poonen4} and~\ref{poonen4C} are equivalent.
\item
Conjectures~\ref{CuiHuS2} and~\ref{CuiHuS2D} are equivalent.
\end{enumerate}
\end{proposition}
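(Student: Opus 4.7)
The plan is to exhibit a natural correspondence between finite union-closed families that contain the empty set and those that do not, and to use this correspondence to establish each of the four equivalences.

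First, I would establish the following observation. For any finite nonempty union-closed family $\Gg$, both $\Gg \cup \{\emptyset\}$ and $\Gg \setminus \{\emptyset\}$ (provided the latter is nonempty) are union-closed, since $\emptyset \cup A = A$. Since no element belongs to $\emptyset$, each element belongs to the same number $k$ of sets of $\Gg$ as of $\Gg \setminus \{\emptyset\}$. Setting $m = |\Gg \setminus \{\emptyset\}|$, the identity $k > m/2 \Longleftrightarrow 2k \geq m+1 \Longleftrightarrow k \geq |\Gg \cup \{\emptyset\}|/2$ shows that $x$ is abundant for $\Gg \setminus \{\emptyset\}$ in the present sense precisely when $x$ belongs to at least half of the sets of $\Gg \cup \{\emptyset\}$ in the original sense. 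In particular, the number of elements satisfying each such condition is preserved when adjoining or removing $\emptyset$.

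Using this correspondence, for each pair I would translate a family satisfying the hypotheses of one conjecture into one satisfying the hypotheses of the other and verify that the conclusions match. For Conjectures~\ref{frankl} and~\ref{franklA} this is essentially immediate. For Conjectures~\ref{poonen3}/\ref{poonen3B} and~\ref{CuiHuS2}/\ref{CuiHuS2D}, the bijection between abundant elements (present) and at-least-half elements (original) shows that the hypotheses about their number transfer correctly, and the conclusion ``$x$ belongs to each set'' in the present sense matches ``$x$ belongs to each non-empty set'' in the original sense, since adjoining or removing $\emptyset$ does not affect membership in non-empty sets.

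The equivalence of Conjectures~\ref{poonen4} and~\ref{poonen4C} is the main obstacle and requires more care. I would first verify that the twin-free condition and the condition ``for every pair $a,b \in M$, $\Ff$ contains a set $A$ with $|A \cap \{a,b\}| = 1$'' are equivalent, using that every element of $\Ff$ lies in $M$ because $M$ is the largest set of a union-closed family and hence $\bigcup_{A \in \Ff} A = M$. The conclusion ``$\Ff$ consists of all sets of $2^M$ containing $x$'' (present) translates under adjoining $\emptyset$ to ``$\Ff$ consists of $\emptyset$ and all sets of $2^M$ containing $x$'' (original). A subtle point arises in the converse direction: if a family $\Ff$ with $\emptyset \not\in \Ff$ and $|M| \geq 2$ satisfied the original hypotheses with a unique at-least-half element $x$, then the present conjecture applied to $\Ff$ would give $\Ff = \{A \subseteq M : x \in A\}$, but then every $y \in M \setminus \{x\}$ would belong to $2^{|M|-2}$ out of $2^{|M|-1}$ sets, which is exactly half and contradicts the uniqueness assumption; this shows such a case does not arise and the translations align. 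The small cases $|M| \leq 1$ are verified by direct inspection of the few possible families.
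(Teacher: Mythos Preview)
Your approach via the $\pm\emptyset$ correspondence and the integer equivalence $k>m/2\Leftrightarrow 2k\ge m+1$ is exactly the one the paper uses, and items (1) and (4) go through as you outline.

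There is, however, a real gap in items (2) and (3), in the direction ``present conjecture $\Rightarrow$ original conjecture''. Your observation gives a bijection between the abundant elements of $\Gg\setminus\{\emptyset\}$ and the at-least-half elements of $\Gg\cup\{\emptyset\}$; it says nothing about the at-least-half elements of a family $\Ff$ that \emph{already} satisfies $\emptyset\notin\Ff$. Concretely, suppose $\Ff$ meets the hypothesis of Conjecture~\ref{poonen3B} with $\emptyset\notin\Ff$, $|\Ff|$ even, and the unique at-least-half element $x$ lying in \emph{exactly} $|\Ff|/2$ sets. Then $x$ is not abundant for $\Ff$, so $\Ff$ does not satisfy the hypothesis of Conjecture~\ref{poonen3}, and your translation fails; adding $\emptyset$ does not help either, since $x$ then lies in strictly fewer than half of the sets of $\Ff\cup\{\emptyset\}$. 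The same issue occurs in your ``subtle point'' for (3): when you write ``the present conjecture applied to $\Ff$ would give $\Ff=\{A\subseteq M:x\in A\}$'', you are silently assuming that $x$ is abundant for $\Ff$, which is precisely what is in doubt. The paper closes this gap by first invoking Proposition~\ref{hierarchy} to deduce that Conjecture~\ref{poonen3} (respectively~\ref{poonen4}) implies Conjecture~\ref{frankl}, and then applying Frankl to $\Ff\setminus\{\emptyset\}$ to guarantee that \emph{some} element is abundant; combined with the easy fact that no element other than $x$ can be abundant, this forces $x$ to be abundant, after which your argument goes through.
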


Proving Propositions~\ref{hierarchy} and~\ref{equivalence},
we note that the arguments actually show the desired implications and equivalences
also with the additional constraint that the largest set is of size at most $n$.
For a possible reference in further research,
we prove the relations of Propositions~\ref{hierarchy} and~\ref{equivalence}
in the stronger form with this additional constraint as follows.
\begin{proposition}
\label{relationsAndN}
Let $n$ be a positive integer.
For the sake of brewity,
we say that a conjecture works for $n$
if the conjecture is true under the additional constraint that the largest set of $\Ff$ is of size at most $n$.
The following statements all hold.
\begin{enumerate}
\item
If Conjecture~\ref{poonen4} works for $n$, then Conjecture~\ref{poonen3} works for $n$.
\item
If Conjecture~\ref{poonen3} works for $n$, then Conjecture~\ref{CuiHuS2} works for $n$.
\item
If Conjecture~\ref{CuiHuS2} works for $n$, then Conjecture~\ref{frankl} works for $n$.
\item
Conjecture~\ref{frankl} works for $n$ if and only if Conjecture~\ref{franklA} works for $n$.
\item
Conjecture~\ref{poonen3} works for $n$ if and only if Conjecture~\ref{poonen3B} works for $n$.
\item
Conjecture~\ref{poonen4} works for $n$ if and only if Conjecture~\ref{poonen4C} works for $n$.
\item
Conjecture~\ref{CuiHuS2} works for $n$ if and only if Conjecture~\ref{CuiHuS2D} works for $n$.
\end{enumerate}
\end{proposition}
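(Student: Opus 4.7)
The plan is to prove the seven statements separately, grouping them by technique. Statements (4)--(7) are equivalences between the present formulations (in which $\emptyset \notin \Ff$ and the abundance threshold is strict~$>$) and the original formulations (in which $\emptyset$ may lie in~$\Ff$ and the threshold is non-strict~$\geq$). For each such equivalence I would toggle the empty set: pass from $\Ff$ to $\Ff \sm \{\emptyset\}$ or from $\Ff$ to $\Ff \cup \{\emptyset\}$. Both operations preserve the union-closed property and the size of the largest set, so they preserve~$n$; they shift $|\Ff|$ by one, and a direct check confirms that ``more than $|\Ff \sm \{\emptyset\}|/2$'' aligns exactly with ``at least $|\Ff|/2$'' and conversely, and that the set of abundant elements is the same before and after. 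For (6), the twin-free hypothesis matches the explicit separation condition of Conjecture~\ref{poonen4C}.

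For (1), I would argue by contrapositive. Let $\Ff$ be a would-be counterexample to Conjecture~\ref{poonen3} for $n$, so $\emptyset \notin \Ff$, the largest set of $\Ff$ has size at most~$n$, precisely one element $x$ is abundant, and some set of $\Ff$ omits~$x$. Since twins have identical counts, $x$ cannot lie in any twin pair (otherwise a second abundant element would appear). Iteratively eliminating twins by deleting one member $b$ of each twin pair from every set of $\Ff$ containing it produces a union-closed family with the same number of sets, the same element counts on the remaining ground set, largest set still of size at most~$n$, and $x$ still the unique abundant element while some set still omits~$x$. Iterating until the family becomes twin-free yields a counterexample to Conjecture~\ref{poonen4} for~$n$, a contradiction.

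For (2), suppose $\Ff$ violates Conjecture~\ref{CuiHuS2} for $n$, so the smallest set has size at least~$2$, the largest at most~$n$, and at most one element is abundant. If $\Ff$ has exactly one abundant element~$x$, then Conjecture~\ref{poonen3} for $n$ applied to $\Ff$ forces $x$ into every set of~$\Ff$, and the family $\{A \sm \{x\} : A \in \Ff\}$ is then union-closed, $\emptyset$-free (since every set of $\Ff$ has size at least~$2$), of largest-set size at most~$n-1$, and has no abundant element --- reducing the problem to the zero-abundant subcase. In that subcase, I would construct an auxiliary union-closed family on the existing ground set (selecting an element $a$ of the largest set of $\Ff$ and augmenting $\Ff$ by $\{a\}$ together with the lifted sets $A \cup \{a\}$ for $A \in \Ff$) in which $a$ is the unique abundant element while some set of $\Ff$ still omits~$a$, contradicting Conjecture~\ref{poonen3} for~$n$. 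For (3), given $\Ff$ as in Conjecture~\ref{frankl} with $\emptyset \notin \Ff$ and largest set of size at most~$n$: if the smallest set is a singleton $\{a\}$, the map $A \mapsto A \cup \{a\}$ injects $\{A \in \Ff : a \notin A\}$ into $\{A \in \Ff : a \in A\}$, and the latter also contains $\{a\}$ itself, so $a$ lies in strictly more than half of the sets; otherwise the smallest set has size at least~$2$ and Conjecture~\ref{CuiHuS2} for $n$ applies directly.

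The main obstacle is the zero-abundant subcase of~(2): a naive auxiliary family that adjoins a fresh element would inflate the largest-set size from~$n$ to~$n+1$, preventing direct application of Conjecture~\ref{poonen3} for~$n$. The construction must therefore reuse an element $a$ already present in $\Ff$, and one has to verify, via the hypothesis that every element has at most $|\Ff|/2$ occurrences in $\Ff$ together with a careful choice of~$a$, that no element other than $a$ becomes abundant in the auxiliary family.
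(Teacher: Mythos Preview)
Your treatment of (1), (3), (4) and (7) coincides with the paper's. For (5) and (6) there is a wrinkle you pass over: in the direction ``strict implies non-strict'', the family $\Ff$ of Conjecture~\ref{poonen3B} (or~\ref{poonen4C}) need not contain $\emptyset$, so the toggle is vacuous and the unique $\geq$-half element $x$ might sit at exactly $|\Ff|/2$ and fail to be abundant in the strict sense. The paper closes this gap by first deducing, via the already-proved (2) and (3), that Conjecture~\ref{frankl} works for $n$, which forces $\Ff\setminus\{\emptyset\}$ to have at least one abundant element and hence forces it to be $x$.

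The substantive gap is in (2). In the one-abundant case the paper does not strip $x$ from every set; it simply replaces a single smallest set $S$ by $S\setminus\{x\}$, keeping the cardinality of the family, keeping $x$ uniquely abundant, and directly exhibiting a set missing $x$. Your detour through the zero-abundant case works logically but lands you in a family whose smallest set may have size~$1$. More seriously, in the zero-abundant case the paper uses a \emph{replacement} rather than an augmentation: for every $A\in\Ff$ with $A\cup\{y\}\notin\Ff$ it swaps $A$ for $A\cup\{y\}$, obtaining a union-closed $\Ff^+$ with $|\Ff^+|=|\Ff|$ in which the count of every $b\neq y$ is literally unchanged. Non-abundance of all $b\neq y$ is then automatic, and only three easy sub-cases remain (the new count of $y$ equals $|\Ff|/2$, lies strictly between $|\Ff|/2$ and $|\Ff|$, or equals $|\Ff|$), each handled by adding $\{y\}$ or deleting $y$ from one smallest set. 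Your proposed augmentation $\Ff\cup\{\{a\}\}\cup\{A\cup\{a\}:A\in\Ff\}$ lacks this feature: any $b$ lying in many sets $A$ with $a\notin A$ and $A\cup\{a\}\notin\Ff$ has its count boosted by that many, and the hypothesis $d_b\le|\Ff|/2$ alone does not prevent $b$ from crossing the new threshold $|\Ff^*|/2$, which may be well below $|\Ff|$. This is exactly the obstacle you flag, and it is genuine; the replacement construction sidesteps it entirely.
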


We observe that it suffices to show Proposition~\ref{relationsAndN}
since Proposition~\ref{relationsAndN} implies Propositions~\ref{hierarchy} and~\ref{equivalence};
for instance, statement (1) of Proposition~\ref{hierarchy} is implied as follows.
Conjecture~\ref{poonen4} is assumed to be true, that is, Conjecture~\ref{poonen4} works for every positive integer $n$,
and we use Proposition~\ref{relationsAndN} and obtain that Conjecture~\ref{poonen3} works for every positive integer $n$,
and thus, Conjecture~\ref{poonen3} is true as desired.

In the remainder of this section, we prove Proposition~\ref{relationsAndN}.

\begin{proof}[Proof of Proposition~\ref{relationsAndN}]
For the sake of brewity,
we let $M(\Ff)$ denote the largest set of a finite union-closed family $\Ff$.
We let $n$ be an arbitrary positive integer, and we fix $n$ and show statements (1), (2), \dots, (7).

We start by showing statement (1), that is, we assume that Conjecture~\ref{poonen4} works for $n$
and show that Conjecture~\ref{poonen3} works for $n$.
In relation to Conjecture~\ref{poonen3},
we let $\Ff_0$ be an arbitrary union-closed family of sets such that
$|M(\Ff_0)| \leq n$ and $\emptyset \not \in \Ff_0$ and
precisely one element is abundant for $\Ff_0$,
and we let $x$ denote this element.
Now, we reduce $\Ff_0$ to a twin-free family as follows. 
If there is a pair of twins in $\Ff_0$, then we choose one of them, say $b$, 
and remove $b$ from every set of $\Ff_0$.
We consider the modified family of sets,
and we apply such removals in sequence until there are no twins in the resulting family,
and we let $\Ff$ denote this family.
We observe that $\Ff$ is a union-closed family of sets and
$|M(\Ff)| \leq n$ and $\emptyset \not \in \Ff$.
Furthermore, the natural bijection between sets of $\Ff_0$ and sets of~$\Ff$
yields that $x$ is the only abundant element for~$\Ff$.
We note that $\Ff$ satisfies the hypothesis of Conjecture~\ref{poonen4}.
Since Conjecture~\ref{poonen4} is assumed to work for~$n$,
we get that $\Ff$ is precisely the family of all sets of $2^{M(\Ff)}$ containing $x$.
It follows that $x$ belongs to every set of the original family $\Ff_0$,
and thus $\Ff_0$ satisfies Conjecture~\ref{poonen3}.
This concludes the proof of statement (1).

In order to show statement (2), we show the contrapositive statement, that is,
we show that if Conjecture~\ref{CuiHuS2} does not work for $n$, then Conjecture~\ref{poonen3} does not work for $n$.
To this end, we consider a family $\Ff$ which witnesses that Conjecture~\ref{CuiHuS2} does not work for $n$.
In particular, family $\Ff$ is union-closed and
its largest set is of size at most $n$,
and smallest set is of size at least $2$,
and at most one element is abundant for $\Ff$.
We discuss two cases.

First,
we suppose that precisely one element is abundant for $\Ff$,
and we let $x$ be this element.
We note that if $x$ does not belong to all sets of $\Ff$, then 
$\Ff$ also witnesses that Conjecture~\ref{poonen3} does not work for $n$, and the desired statement follows.
Hence, we can assume that $x$ belongs to every set of $\Ff$.
We let $S$ be a smallest set of $\Ff$,
and let $\Ff'$ be the family obtained from $\Ff$ by replacing $S$ with the set $S \sm \{x\}$.
We use the fact that $\Ff$ is union-closed and observe that $\Ff'$ is also union-closed
(since for every pair of distinct sets $A, B$ of $\Ff'$,
the set $A \cup B$ contains $x$ and $|A \cup B| > |S|$).
In addition, we note that $|M(\Ff')| \leq n$ and $\emptyset \not \in \Ff'$
and $x$ is the only abundant element for $\Ff'$,
but $x$ does not belong to each set of $\Ff'$.
Thus, family $\Ff'$ witnesses that Conjecture~\ref{poonen3} does not work for $n$.

Second, we suppose that no element is abundant for $\Ff$.
We choose an element $y$ of $M(\Ff)$ and modify $\Ff$ as follows.
For every set $A$ of $\Ff$ such that the set $A \cup \{y\}$ does not belong to $\Ff$,
we remove $A$ from $\Ff$ and add $A \cup \{y\}$ to $\Ff$,
and we let $\Ff^+$ denote the resulting family.
Clearly, $|M(\Ff^+)| \leq n$, and we observe that $\Ff^+$ is union-closed. 
Furthermore, we use the natural bijection between sets of $\Ff$ and sets of $\Ff^+$
and note that $|\Ff| = |\Ff^+|$ and that for every element of $M(\Ff) \sm \{y\}$,
the number of its occurrences in $\Ff$ is equal to the number of occurrences in $\Ff^+$.
We let $c$ be the number of occurrences of $y$ in $\Ff^+$ and we observe that $c \geq \frac{1}{2}|\Ff^+|$.
We discuss three cases based on $c$.

For the case that $c = \frac{1}{2}|\Ff^+|$,
we use the fact that a smallest set of $\Ff^+$ is of size at least $2$
and consider the family $\Ff'$ obtained from $\Ff^+$ by adding the set $\{y\}$.
We observe that $\Ff'$ is union-closed and 
conclude that $\Ff'$ witnesses that Conjecture~\ref{poonen3} does not work for $n$.
 
For the case that $|\Ff^+| > c > \frac{1}{2}|\Ff^+|$,
we note that family $\Ff^+$ witnesses that Conjecture~\ref{poonen3} does not work for $n$.

For the case that $c = |\Ff^+|$,
we let $S$ be a smallest set of $\Ff^+$ and we modify $\Ff^+$ as follows.
We use the fact that $|S| \geq 2$
and let $\Ff'$ be the family obtained from $\Ff^+$ by replacing $S$ with $S \sm \{y\}$.
We observe that $\Ff'$ is union-closed. 
Thus, $\Ff'$ witnesses that Conjecture~\ref{poonen3} does not work for $n$
which concludes the proof of statement (2).

In order to show statement (3),
we let $\Ff$ be a union-closed family of sets such that
$|M(\Ff)| \leq n$ and $\emptyset \not \in \Ff$,
and we discuss two cases.

For the case that a smallest set of $\Ff$ is of size at least $2$,
we use that Conjecture~\ref{CuiHuS2} is assumed to work for $n$
and conclude that some element is abundant for $\Ff$,
and thus $\Ff$ satisfies Conjecture~\ref{frankl}.

For the case that a smallest set of $\Ff$ is of size $1$,
we use a simple idea of~\cite{SR} as follows.
We let $\{x\}$ be a smallest set of $\Ff$
and show that $x$ is abundant for $\Ff$.
We consider a set $A$ of $\Ff$ such that $x$ does not belong to $A$,
and we note that the set $A \cup \{x\}$ belongs to $\Ff$
(since $\Ff$ is union-closed).
For each such set $A$, the set $A \cup \{x\}$ is unique and distinct from $\{x\}$,
and hence the number of sets containing $x$ is greater than the number of sets not containing $x$. 
Thus, $x$ is abundant for $\Ff$,
and $\Ff$ satisfies Conjecture~\ref{frankl}.

For statement (4), we need to show two implications. 
First, we show that if Conjecture~\ref{frankl} works for $n$,
then Conjecture~\ref{franklA} works for $n$.
We let $\Ff$ be a union-closed family of sets such that
$|M(\Ff)| \leq n$ and $\Ff \neq \{\emptyset\}$.
We let $\Ff' = \Ff \sm \emptyset$
and we use that Conjecture~\ref{frankl} is assumed to work for $n$
and note that $\Ff'$ has an abundant element,
and we let $x$ be this element and $c$ be the number of sets of $\Ff'$ containing~$x$.
In particular, $c > \frac{1}{2}|\Ff'|$ and we use the fact that $c$ and $|\Ff'|$ are integers
and obtain $2c \geq |\Ff'| + 1 \geq |\Ff|$.
Hence, $c \geq \frac{1}{2}|\Ff|$ and it follows that $x$ belongs to at least half of the sets of $\Ff$.

Second, we show that if Conjecture~\ref{franklA} works for $n$, then
Conjecture~\ref{frankl} works for~$n$.
We let $\Ff$ be a union-closed family of sets such that
$|M(\Ff)| \leq n$ and $\emptyset \not \in \Ff$.
We let $\Ff' = \Ff \cup \emptyset$ and use that Conjecture~\ref{franklA} is assumed to work for $n$.
We conclude some element belongs to at least half of the sets of $\Ff'$,
and thus this element belongs to more than half of the sets of $\Ff$.

In order to prove statement (5),
we first show that if Conjecture~\ref{poonen3} works for $n$,
then Conjecture~\ref{poonen3B} works for $n$.
To this end, we consider a union-closed family $\Ff$ of sets such that $|M(\Ff)| \leq n$
and such that precisely one element of $M(\Ff)$ belongs to at least half of the sets of $\Ff$;
and we let $x$ be this element.
We let $\Ff' = \Ff \sm \emptyset$ (possibly $\Ff' = \Ff$ if $\emptyset \not \in \Ff$).
We consider an arbitrary element $a$ of $M(\Ff') \sm \{x\}$ and show that $a$ is not abundant for $\Ff'$.
We let $c$ be the number of occurrences of $a$ in $\Ff'$
and note that 
$c < \frac{1}{2}|\Ff|$, and hence $2c \leq |\Ff| - 1 \leq |\Ff'|$,
and thus $c \leq \frac{1}{2}|\Ff'|$ as desired.
On the other hand since Conjecture~\ref{poonen3} is assumed to work for $n$,
we can use statements (2) and (3) of Proposition~\ref{relationsAndN} and obtain that Conjecture~\ref{frankl} works for $n$,
and this yields that $\Ff'$ has at least one abundant element.
It follows that $x$ is the only abundant element for $\Ff'$.
We use that Conjecture~\ref{poonen3} works for $n$
and conclude that $x$ belongs to each set of $\Ff'$,
and thus $x$ belongs to each non-empty set of $\Ff$.

Now, we show that if Conjecture~\ref{poonen3B} works for $n$,
then Conjecture~\ref{poonen3} works for $n$.
We let $\Ff$ be a union-closed family of sets such that $|M(\Ff)| \leq n$ and $\emptyset \not \in \Ff$
and let $x$ be the only abundant element for $\Ff$.
We let $\Ff' = \Ff \cup \emptyset$
and note that $x$ is the only element which belongs to at least half of the sets of $\Ff'$.
We conclude that $x$ belongs of each non-empty set of $\Ff'$,
and thus to every set of $\Ff$.

In order to prove the first implication of statement (6), 
we consider a union-closed family $\Ff$ which satisfies the following.
\begin{itemize}
\item
$|M(\Ff)| \leq n.$
\item
There are no twins in $\Ff$.
\item
Precisely one element of $M(\Ff)$ belongs to at least half of the sets of $\Ff$,
and we let $x$ be this element.
\end{itemize}
Clearly, the desired statement holds if $|M(\Ff)| \leq 1$.
Hence, we can assume that $|M(\Ff)| \geq 2$.
We let $\Ff' = \Ff \sm \emptyset$, and we note that $\Ff'$ has no twins and $|M(\Ff')| \leq n$.
We use a similar argument as in the proof of (5)
and obtain that $x$ is the only abundant element for $\Ff'$.
Since Conjecture~\ref{poonen4} works for $n$,
we get that $\Ff'$ consists of precisely all sets of $2^{M(\Ff')}$ containing $x$.
Finally, we use that $\Ff' = \Ff \sm \emptyset$ and $|M(\Ff)| \geq 2$ 
and that precisely one element of $M(\Ff)$ belongs to at least half of the sets of $\Ff$,
and we conclude that $\Ff$ consists of the empty set and precisely all sets of $2^{M(\Ff)}$ containing $x$.

%

For the reverse implication,
we let $\Ff$ be a union-closed family of sets such that
$|M(\Ff)| \leq n$ and $\emptyset \not \in \Ff$
and such that there are no twins in $\Ff$
and precisely one element is abundant for $\Ff$; and we let $x$ be this element.
Clearly, if $|M(\Ff)| \leq 1$ then $\Ff$ satisfies Conjecture~\ref{poonen4}.
Hence, we can assume that $|M(\Ff)| \geq 2$.
We let $\Ff' = \Ff \cup \emptyset$
and note that $x$ is the only element which belongs to at least half of the sets of $\Ff'$.
Thus, $\Ff'$ consists of the empty set and precisely all sets of $2^{M(\Ff')}$ containing $x$.
It follows that $\Ff$ consists of precisely all sets of $2^{M(\Ff)}$ containing $x$.

Finally, we show statement (7). The proof is similar to the proof of (4).
We let $\Ff$ be a union-closed family of sets 
whose largest set is of size at most $n$ and smallest non-empty set is of size at least $2$.
We let $\Ff' = \Ff \sm \emptyset$ and use that Conjecture~\ref{CuiHuS2} is assumed to work for $n$.
If follows that at least two elements of $M(\Ff')$ belong to more than half of the sets of $\Ff'$,
and thus belong to at least half of the sets of $\Ff$.

For the reverse implication, we let $\Ff$ be a union-closed family of sets
whose largest set is of size at most $n$ and smallest set is of size at least $2$.
We consider the family $\Ff \cup \emptyset$ and use a similar argument,
and we conclude that at least two elements are abundant for $\Ff$.
\end{proof}


\section*{Acknowledgement}
We thank Stijn Cambie for his suggestion on extending the constructions $\Pp^8_3$ and $\Pp^{12}_4$ to $\Pp^n_k$.



\begin{thebibliography}{99}
%
\bibitem{AHS}
R. Alweiss, B. Huang and M. Sellke:
Improved Lower Bound for the Union-Closed Sets Conjecture,
arXiv:2211.11731 (2022).

\bibitem{BBE}
I. Balla, B. Bollob\'{a}s and T. Eccles:
Union-closed families of sets,
Journal of Combinatorial Theory, Series A 120 (2013), 531--544.
%
\bibitem{BM}
I. Bo\v{s}njak and P. Markovi\'{c}:
The $11$-element case of Frankl's conjecture,
European Journal of Combinatorics 15 (2008), R88.

\bibitem{BS}
H. Bruhn and O. Schaudt:
The journey of the union-closed sets conjecture,
Graphs and Combinatorics 31 (2015), 2043--2074.

\bibitem{BCST}
H. Bruhn, P. Charbit, O. Schaudt and J. A. Telle:
The graph formulation of the union-closed sets conjecture,
European Journal of Combinatorics 43 (2015), 210--219.

\bibitem{C}
S. Cambie:
Better bounds for the union-closed sets conjecture using the entropy approach,
arXiv:2212.12500 (2022).

\bibitem{CL}
Z. Chase and S. Lovett:
Approximate union closed conjecture,
arXiv:2211.11689 (2022).

\bibitem{CH}
Z. Cui and Z. Hu:
Two stronger versions of the union-closed sets conjecture,
arXiv:1711.04276 (2019).

\bibitem{LF}
G. Lo Faro:
Union-closed sets conjecture: improved bounds,
Journal of Combinatorial Mathematics and Combinatorial Computing 16 (1994), 97--102.

\bibitem{G}
J. Gilmer:
A constant lower bound for the union-closed sets conjecture,
arXiv:2211.09055 (2022).

\bibitem{F}
P. Frankl:
Extremal set systems, Handbook of combinatorics,
Elsevier (1995), 1293--1329.

\bibitem{Ma}
P. Markovi\'{c}:
An attempt at Frankl's conjecture,
Publications de l'Institut Math\'{e}matique 81 (2007), 29--43.

\bibitem{Mo}
R. Morris:
FC-families and improved bounds for Frankl's conjecture,
European Journal of Combinatorics 27 (2006), 269--282.

\bibitem{Pe}
L. Pebody:
Extension of a Method of Gilmer, 
arXiv:2211.13139 (2022).

\bibitem{P}
B. Poonen:
Union-closed families,
Journal of Combinatorial Theory, Series A 59 (1992), 253--268.

\bibitem{S}
W. Sawin:
An improved lower bound for the union-closed set conjecture,
arXiv:2211.11504 (2022).

\bibitem{SR}
D. G. Sarvate and J.-C. Renaud:
On the union-closed sets conjecture,
Ars Combinatorica 27 (1989), 149--154.

\bibitem{VZ}
B. Vu\v{c}kovi\'{c} and M. \v{Z}ivkovi\'{c}:
The $12$-element case of Frankl's conjecture,
IPSI BgD Transactions on Internet Research 13 (2017), 65--71.
%
\bibitem{W}
P. W\'{o}jcik:
Union-closed families of sets, Discrete Mathematics 199 (1999), 173--182.
%
\end{thebibliography}
\end{document}